\newtheorem{thm}{Theorem}
\newtheorem{lem}[thm]{Lemma}
\newtheorem{cor}[thm]{Corollary}
\theoremstyle{definition}
\newtheorem{defn}[thm]{Definition}
\newtheorem{rmk}[thm]{Remark}
\newcommand{\CPb}{\overline{\mathbb{CP}}{}^{2}}
\newcommand{\CP}{{\mathbb{CP}}{}^{2}}
\newcommand{\R}{\mathbb{R}}
\newcommand{\Z}{\mathbb{Z}}
\newcommand{\M}{M_{n}^{p}}
\newcommand{\sM}{M_{1}^{p}}
\newcommand{\ssM}{M_{1}^{1}}
\newcommand{\Mn}{M_{n}^{1}}
\newcommand{\Mz}{M_{n}^{0}}
\title{Exotic smooth structures on $S^2\times S^2$}
\author[A. Akhmedov]{Anar Akhmedov}
\address{School of Mathematics,
University of Minnesota,
Minneapolis, MN, 55455, USA}
\email{akhmedov@math.umn.edu}
\author[B. D. Park]{B. Doug Park}
\address{Department of Pure Mathematics,
University of Waterloo,
Waterloo, ON, N2L 3G1, Canada}
\email{bdpark@math.uwaterloo.ca}
\date{March 26, 2010.  Revised on June 23, 2010}
\subjclass[2010]{Primary 57R55; Secondary 57R17, 57R57}
\begin{document}

\begin{abstract}
We construct an infinite family of mutually nondiffeomorphic irreducible
smooth structures on
the topological $4$-manifold $S^2\times S^2$.
\end{abstract}

\maketitle

\section{Introduction}

Let $M$\/ denote a closed smooth $4$-manifold.
$M$\/ is called \emph{irreducible}\/ if every connected sum decomposition of $M$\/
as $M=X\# Y$\/ implies that either $X$\/ or $Y$\/ is homeomorphic to the $4$-sphere $S^4$.
The following lemma will be useful for determining irreducibility.

\begin{lem}\label{lem: irreducibility}
Every closed smooth oriented simply connected spin\/ $4$-manifold with nontrivial Seiberg-Witten invariant is irreducible.
\end{lem}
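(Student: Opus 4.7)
The plan is to argue by contradiction. Suppose $M = X \# Y$ with neither $X$ nor $Y$ homeomorphic to $S^4$. Since $M$ is simply connected, van Kampen's theorem forces $X$ and $Y$ to be simply connected as well. The intersection form splits as $Q_M = Q_X \oplus Q_Y$; since $M$ is spin, $Q_M$ is even, hence both $Q_X$ and $Q_Y$ are even, so both $X$ and $Y$ are spin (for simply connected closed oriented $4$-manifolds, spin is equivalent to having an even intersection form by Wu's formula).

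The next step is to invoke the Seiberg-Witten connected sum vanishing theorem: if both $b_2^+(X) \geq 1$ and $b_2^+(Y) \geq 1$, then every Seiberg-Witten invariant of $X \# Y$ vanishes. Since $M$ is assumed to have a nontrivial SW invariant, one of the two factors, say $Y$, must satisfy $b_2^+(Y) = 0$; equivalently, $Q_Y$ is negative semi-definite.

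Since $Y$ is a closed oriented simply connected $4$-manifold, Poincar\'e duality forces $Q_Y$ to be unimodular, so it is either trivial or negative definite. In the latter case, Donaldson's diagonalization theorem gives $Q_Y \cong \langle -1 \rangle^{b_2(Y)}$, which is an odd form, contradicting the fact that $Q_Y$ is even. Hence $Q_Y$ is trivial, and $Y$ is a closed simply connected topological $4$-manifold with trivial intersection form; Freedman's classification then forces $Y$ to be homeomorphic to $S^4$, contradicting our choice of decomposition.

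The main obstacle is really just organizing the three deep external inputs --- the Seiberg-Witten connected sum vanishing theorem, Donaldson's theorem on definite intersection forms, and Freedman's topological classification --- in the right sequence. None of the individual steps requires any calculation, and the spin hypothesis enters decisively only in the penultimate step, where it is used to rule out the diagonal negative-definite form that Donaldson's theorem allows.
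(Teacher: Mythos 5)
Your proof is correct and follows essentially the same route as the paper's: the Seiberg--Witten connected-sum vanishing theorem forces one summand to have $b_2^+=0$, Donaldson's diagonalization theorem rules out a nontrivial negative definite (hence odd) even form, and Freedman's theorem identifies the remaining summand with $S^4$. The only difference is cosmetic (you frame it as a contradiction rather than a direct verification of the definition of irreducibility).
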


\begin{proof}
Let $M$\/ be a closed smooth oriented simply connected spin $4$-manifold with nontrivial Seiberg-Witten invariant.
Suppose $M=X\# Y$\/ is a connected sum of two smooth $4$-manifolds $X$\/ and $Y$.  Then both $X$\/ and $Y$\/ are simply connected and the intersection forms of $X$\/ and $Y$\/ are both even.

If $b_2^{+}(X)$ and $b_2^{+}(Y)$ are both strictly positive, then the Seiberg-Witten invariant of $X\# Y$\/ is trivial (cf.\ \cite{witten}).  This contradiction shows that one of $b_2^{+}(X)$ and $b_2^{+}(Y)$ is $0$.
Without loss of generality, assume $b_2^{+}(X)=0$.  If $b_2(X)=b_2^{-}(X)>0$, then the intersection form of $X$\/ is a nontrivial negative definite form, so by Donaldson's theorem in \cite{donaldson}, it is isomorphic to the diagonal form $b_2(X)[ -1]$.  But this contradicts the fact that the intersection form of $X$\/ is even.
Thus we conclude that $b_2(X)=0$.  Since $X$\/ is simply connected, $X$\/ must be homeomorphic to $S^4$ by Freedman's theorem in \cite{freedman}.
\end{proof}

To state our results, it will be convenient to introduce the following terminology.

\begin{defn}\label{defn: infty property}
Let $M$\/ be a smooth $4$-manifold.  We say that $M$\/ has \emph{$\infty$-property}\/ if there exist an irreducible symplectic $4$-manifold and infinitely many mutually nondiffeomorphic irreducible nonsymplectic $4$-manifolds, all of which are homeomorphic to $M$.
\end{defn}

Let $S^2\times S^2$ denote the cartesian product of two $2$-spheres.
It was proved in \cite{AP: spin} that $(2k-1)(S^2\times S^2)$,
the connected sum of $2k-1$ copies of $S^2\times S^2$,
has $\infty$-property for every integer $k\geq 138$.
The main goal of this paper is to prove the following.

\begin{thm}\label{thm: main}
$S^2\times S^2$ has $\infty$-property.
\end{thm}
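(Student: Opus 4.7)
The plan is to produce a single symplectic model $X$ homeomorphic to $S^2\times S^2$, verify it is irreducible via Lemma~\ref{lem: irreducibility}, and then vary it by a Fintushel--Stern type construction to obtain an infinite family $\{X_n\}_{n\geq 1}$ of pairwise nondiffeomorphic irreducible nonsymplectic manifolds, each still homeomorphic to $S^2\times S^2$.

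First, I would construct the symplectic model. The target invariants are $\chi=4$, $\sigma=0$, spin, simply connected with intersection form $H$. I would start from a ``raw'' symplectic $4$-manifold built as a symplectic fiber sum of standard pieces (for instance, a sum of two copies of $T^2\times \Sigma_2$ along a genus-$2$ surface, or a sum of $T^2\times\Sigma_2$ with a suitable symplectic piece like a Matsumoto/Akhmedov building block), chosen so that after the sum the Euler characteristic and signature land on $4$ and $0$, and so that the intersection form is even. The raw manifold will have large $\pi_1$ generated by the loops in the gluing regions. I would then perform a carefully chosen sequence of Luttinger surgeries on Lagrangian tori sitting in the summands; each such surgery is symplectic, does not change $\chi$ or $\sigma$, and kills an explicit $\pi_1$-generator. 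Choosing enough commuting Lagrangian tori whose meridians normally generate $\pi_1$ produces a simply connected symplectic $4$-manifold $X$ with the invariants of $S^2\times S^2$. Provided one checks that the Luttinger surgery coefficients are $\pm 1$ on appropriate bases, the spin condition (equivalently, the evenness of the intersection form) is preserved, so by Freedman's theorem $X$ is homeomorphic to $S^2\times S^2$. By Taubes, $X$ has nontrivial Seiberg--Witten invariant, so Lemma~\ref{lem: irreducibility} gives irreducibility.

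Next, to produce the infinite nonsymplectic family, I would locate a homologically essential Lagrangian (or symplectic) torus $T\subset X$ of self-intersection $0$ such that $\pi_1(X\setminus T)=1$. Applying Fintushel--Stern knot surgery along $T$ with a family of fibered knots $K_n$ in $S^3$ of distinct Alexander polynomial (e.g.\ the twist knots) yields manifolds $X_n$ that are homeomorphic to $X$, hence to $S^2\times S^2$, and whose Seiberg--Witten invariants are computed by multiplication by the symmetrized Alexander polynomial of $K_n$. Taking $K_n$ so that these Alexander polynomials are pairwise distinct and not monic of the form $\pm t^{\pm k}$ guarantees both that the $X_n$ are pairwise nondiffeomorphic and that they are nonsymplectic (by the Taubes constraint on SW basic classes of symplectic manifolds). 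Since knot surgery preserves the spin type (both $T$ and its meridian are null-homologous mod $2$ by construction), each $X_n$ is again spin with nontrivial SW invariant, so Lemma~\ref{lem: irreducibility} again gives irreducibility.

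The hardest step, and the real technical heart, will be the construction of $X$ itself: balancing three tight constraints simultaneously, namely (i) hitting $\chi=4$, $\sigma=0$ with a symplectic sum that still admits enough disjoint Lagrangian tori to reduce $\pi_1$ to the trivial group, (ii) keeping the intersection form even through every Luttinger surgery so that the final manifold is spin, and (iii) preserving in $X$ a framed torus with simply connected complement on which the Fintushel--Stern construction can be carried out in the next step. The prior work \cite{AP: spin} handled analogous constraints for $(2k-1)(S^2\times S^2)$ only for $k\geq 138$, and the difficulty of pushing the construction all the way down to $k=1$ is precisely what makes this theorem nontrivial; I expect the bulk of the paper to be the explicit identification of the building blocks, the bookkeeping of the Lagrangian tori and surgery framings, and the $\pi_1$ calculation showing that the chosen Luttinger surgeries suffice.
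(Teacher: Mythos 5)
Your overall template---reverse engineering: build a symplectic model with the right $\chi$ and $\sigma$, kill $\pi_1$ by Luttinger surgeries, then perturb to get an infinite family---matches the paper's in its first half, although the crucial object is left unconstructed: the paper's model is not a fiber sum but the free quotient $X=(\Sigma_2\times\Sigma_3)/\Z_2$, a minimal surface of general type fibering as a genus~2 bundle over a genus~2 surface, with $e=4$, $\sigma=0$, $b_1=6$, $b_2=14$, and six explicit disjoint Lagrangian tori whose surgeries reduce $b_1$ to $0$ and $b_2$ to $2$. Identifying such a model and carrying out the $\pi_1$ computation is, as you anticipate, the bulk of the work.

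The genuine gap is in your second step. A homologically essential embedded torus $T$ of self-intersection $0$ cannot exist in any $4$-manifold $M$ homeomorphic to $S^2\times S^2$ with nontrivial Seiberg--Witten invariant, so the Fintushel--Stern knot surgery you propose has no place to be performed. Indeed, write $H^2(M;\Z)=\Z A\oplus\Z B$ with $A^2=B^2=0$, $A\cdot B=1$. Any basic class $L$ is characteristic, hence $L=rA+sB$ with $r,s$ even, and the dimension formula forces $L^2=2rs\geq 2e+3\sigma=8$, so $r\neq 0$ and $s\neq 0$. A square-zero torus class $cA+dB$ satisfies $cd=0$, so if it is nonzero then $L\cdot T=rd+sc\neq 0$, violating the adjunction inequality $2g-2\geq T^2+|L\cdot T|$ for an embedded torus. (This is exactly the adjunction argument the paper itself uses to pin down the basic classes.) The paper sidesteps this obstruction by making the $n$-dependence part of the reverse engineering itself: one of the six torus surgeries on the model $X$ is taken with coefficient $-1/n$ along a Lagrangian torus that is still homologically essential \emph{in $X$} (it becomes null-homologous only after surgery), and the Morgan--Mrowka--Szab\'o-type product formula then yields $|SW_{M_n^1}(\pm(2A+2B))|=n$, which simultaneously distinguishes the $M_n^1$ and, via Taubes's theorem that the canonical class of a symplectic manifold has invariant $\pm1$, shows they are nonsymplectic for $n\geq 2$. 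If you want to salvage your outline, the infinite family must come from varying a surgery coefficient on a torus of the model before $\pi_1$ and $H_1$ are killed, not from knot surgery on the finished exotic $S^2\times S^2$.
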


At the moment, $S^2\times S^2$ has the smallest Euler characteristic (four)
amongst all closed simply connected topological $4$-manifolds
that are known to possess more than one smooth structure.
The only closed simply connected $4$-manifolds with smaller Euler characteristic
are $S^4$ and the complex projective plane $\CP$.
In \cite{AP: II}, we show that $(2k-1)(S^2\times S^2)$ has $\infty$-property for every $k\geq 2$.
From Theorem~\ref{thm: main},
we also obtain infinitely many mutually nondiffeomorphic (albeit not irreducible) smooth structures on $\CP\#m\CPb$ for every $m\geq 2$ by blowing up $m-1$ points on our exotic $S^2 \times S^2$'s.
When $m=4$, we can deduce the following.

\begin{cor}\label{cor: metric}
There exist infinitely many mutually nondiffeomorphic\/
$4$-manifolds $\{Y_n \mid n=1,2,3,\dots \}$ that are all homeomorphic to\/ $\CP\# 4\CPb$ and satisfy the following.
\begin{itemize}
\item[{\rm (i)}]  Each $Y_n$ does not admit any Einstein metric.
\item[{\rm (ii)}]  Each $Y_n$ has negative Yamabe invariant.
\item[{\rm (iii)}]  On each $Y_n$, there does not exist any nonsingular solution
to the normalized Ricci flow for any initial metric.
\end{itemize}
\end{cor}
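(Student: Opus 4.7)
The plan is to set $Y_n := X_n \# 3\CPb$, where $\{X_n\}_{n=1}^{\infty}$ is the family of pairwise nondiffeomorphic irreducible smooth structures on $S^2 \times S^2$ produced by Theorem~\ref{thm: main}; by the $\infty$-property, one member is symplectic and all of them carry nontrivial Seiberg-Witten invariants. Since $Y_n$ is non-spin and its intersection form agrees in rank, signature, and parity with that of $\CP \# 4\CPb$, Freedman's theorem identifies $Y_n$ topologically with $\CP \# 4\CPb$. The mutual nondiffeomorphism of the $Y_n$ follows from the blow-up formula for Seiberg-Witten invariants, which transports the distinct invariants of the $X_n$ over to distinct invariants on the $Y_n$.

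The numerical input driving (i)--(iii) is as follows: $\chi(Y_n) = 7$ and $\sigma(Y_n) = -3$, so $(2\chi + 3\sigma)(Y_n) = 5$; and each $X_n$ should carry a Seiberg-Witten monopole class $c$ with $c^2 = 2\chi(X_n) + 3\sigma(X_n) = 8$, inherited from the symplectic canonical class of the minimal model underlying the construction in Theorem~\ref{thm: main}. The blow-up formula then produces monopole classes $c \pm E_1 \pm E_2 \pm E_3$ on $Y_n$, each of positive self-intersection $5$.

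For (i), I would apply LeBrun's refined curvature estimate for $4$-manifolds of the form $N \# k\,\CPb$: the existence of an Einstein metric would force $(2\chi + 3\sigma)(N \# k\,\CPb) \geq \tfrac{2}{3}\,c^2$ for any monopole class $c$ on the minimal piece $N$. With $c^2 = 8$ this demands $5 \geq 16/3$, which fails; hence no $Y_n$ admits an Einstein metric. For (ii), LeBrun's Seiberg-Witten estimate on the Yamabe invariant gives $\mathcal{Y}(Y_n) \leq -4\pi\sqrt{2\cdot 5} = -4\pi\sqrt{10} < 0$, directly from the presence of a monopole class of positive self-intersection on $Y_n$. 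For (iii), the theorem of Fang-Zhang-Zhang, as sharpened by Ishida, asserts that a nonsingular solution of the normalized Ricci flow on $Y_n$ with any initial metric would force the same LeBrun-type inequality used in (i); the same numerical failure therefore rules out such a solution.

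The main obstacle I anticipate is verifying that every $X_n$ in the family, including the infinitely many nonsymplectic members, actually carries a Seiberg-Witten monopole class of self-intersection $8$. Nontriviality of the Seiberg-Witten invariants alone is weaker than this assertion. The resolution should come from the explicit construction behind Theorem~\ref{thm: main}, which one expects to proceed via Luttinger-type surgeries on null-homologous tori inside a symplectic minimal model: such surgeries alter the Seiberg-Witten polynomial without disturbing the set of monopole classes or their self-intersection numbers. Once this fact is secured, the three LeBrun-type inequalities yield items (i)--(iii) uniformly across the family.
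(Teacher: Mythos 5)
Your proposal follows the paper's proof essentially verbatim: the paper likewise sets $Y_n = \Mn \# 3\CPb$, invokes Freedman's theorem for the homeomorphism type, and derives (i)--(iii) from LeBrun's Theorem~3.3, LeBrun's Yamabe estimate, and Theorem~A of Ishida--R\u{a}sdeaconu--\c{S}uvaina with exactly the numerology you compute ($c^2=8$, $k=3$, $\ell=0$). The obstacle you flag is resolved by Theorem~\ref{thm: sw} of the paper, which shows that the only basic classes of $\Mn$ are $\pm(2A+2B)$, a monopole class of square $8$, with Seiberg--Witten value $\pm n$.
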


\begin{proof}
In Section~\ref{sec: s2xs2}, we construct infinitely many mutually nondiffeomorphic $4$-manifolds $\{\Mn \mid n=1,2,3,\dots \}$ that are all homeomorphic to $S^2\times S^2$ and have distinct nontrivial Seiberg-Witten invariants.
Let $Y_n=\Mn \# 3\CPb$.
Then by Freedman's theorem in \cite{freedman},
each $Y_n$ is homeomorphic to $\CP\# 4\CPb$.

Part (i) now follows from Theorem~3.3 in \cite{lebrun: ricci}
(by setting $X=\Mn$, $k=3$ and $\ell=0$ in the notation of \cite{lebrun: ricci}).
Part (ii) follows from \cite{lebrun: einstein}
(see the paragraph preceding Theorem~7 in \cite{lebrun: einstein}).
Part (iii) follows from Theorem~A in \cite{IRS}
(by setting $X=\Mn$ and $k=3$ in the notation of \cite{IRS}).
\end{proof}

We point out that the analogue of Corollary~\ref{cor: metric} for
$\CP\# m\CPb$, when $m=5,6,7,8$, has been proved in \cite{IRS,RS}.
The proof of Theorem~\ref{thm: main} is spread out in
Sections~\ref{sec: model}--\ref{sec: sw}.
In Section~\ref{sec: pi_1}, we also construct other families of $4$-manifolds with cyclic fundamental groups and having Euler characteristic equal to four.
Our overall strategy is to apply the `reverse engineering' technique of \cite{FPS} to
a suitably chosen nontrivial genus 2 surface bundle over a genus 2 surface.
In \cite{baykur: talk, baykur}, Baykur has used a similar method
to construct exotic smooth structures on $\CP\#5\CPb$.

\section{Model complex surface}
\label{sec: model}

Let $\Sigma_g$ denote a closed genus $g$\/ Riemann surface.
Let $\tau_1:\Sigma_2\to\Sigma_2$ be an elliptic involution with
two fixed points $\{z_0,z_1\}$ such that $\Sigma_2/\langle\tau_1\rangle = \Sigma_1$.  Let
$\tau_2 :\Sigma_3\to \Sigma_3$ be a fixed point free involution with
$\Sigma_3/\langle\tau_2\rangle=\Sigma_2$.  See Figure~\ref{fig: involutions} and also
Figure~\ref{fig: lagrangian tori} wherein $\tau_1$ is a 180 degree anti-clockwise rotation around the `center' point $z_1$.
There is a free $\Z/2$ action on the product $\Sigma_2\times \Sigma_3$ given by $\alpha(z,w)=(\tau_1(z),\tau_2(w))$ for $\alpha\neq0 \in \Z/2$, $z\in\Sigma_2$, and $w\in\Sigma_3$.
Let $X=(\Sigma_2\times\Sigma_3)/\langle\alpha\rangle$ denote the quotient manifold,
and let $q:\Sigma_2\times\Sigma_3\rightarrow X$\/ denote the quotient map.
The Euler characteristic and the Betti numbers of $X$\/ are
$e(X) =e(\Sigma_2\times\Sigma_3)/2= (-2)(-4)/2 =4$,
$b_1(X)=6$ and $b_2(X)=14$.
$X$\/ is a minimal complex surface of general type with $p_g=q=3$ and $K^2=8$
(cf.~\cite{hacon-pardini}).

\begin{figure}[ht]
\begin{center}
\includegraphics[scale=.55]{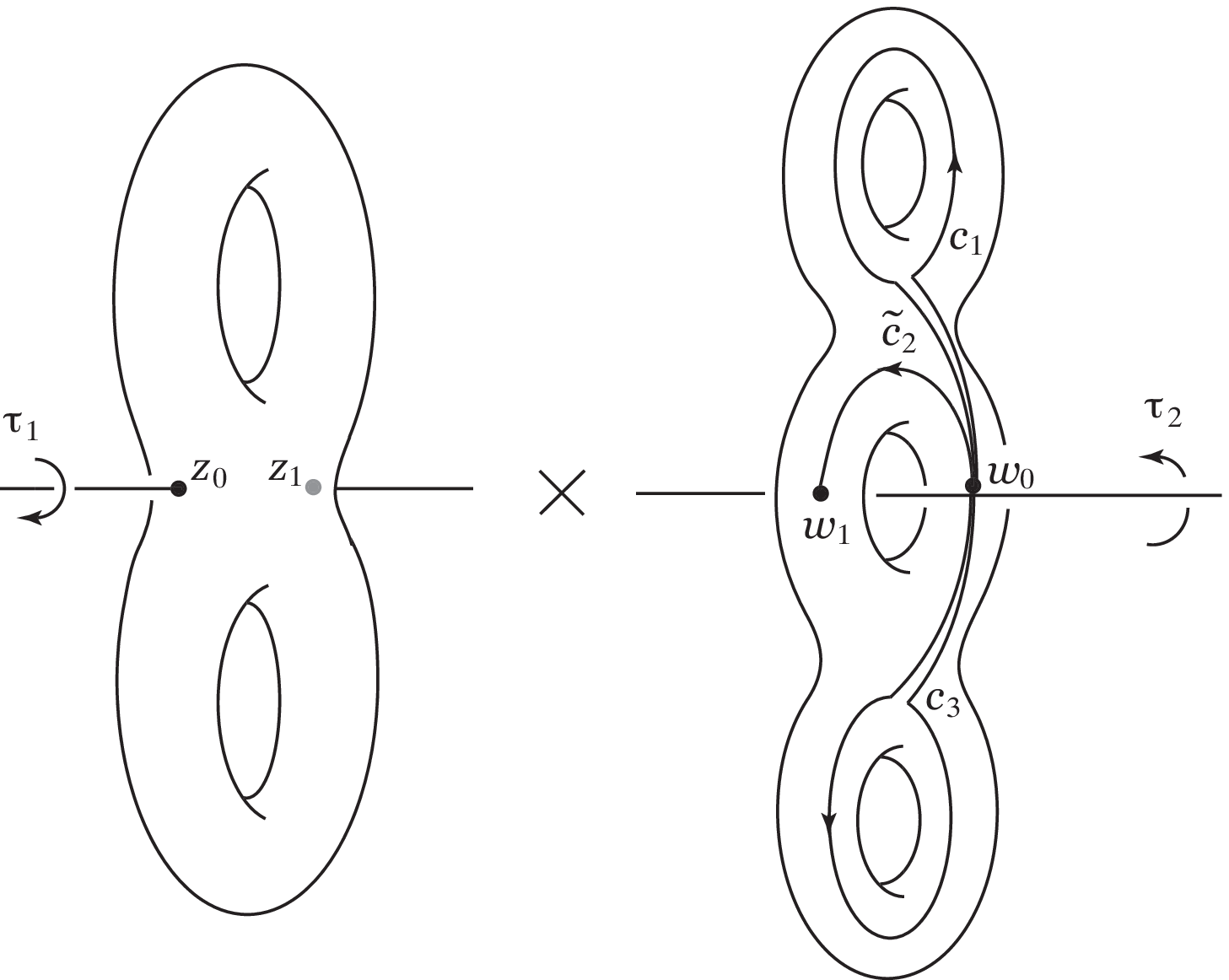}
\end{center}
\caption{Involution $\alpha=(\tau_1,\tau_2)$}
\label{fig: involutions}
\end{figure}

\begin{figure}[ht]
\begin{center}
\includegraphics[scale=.75]{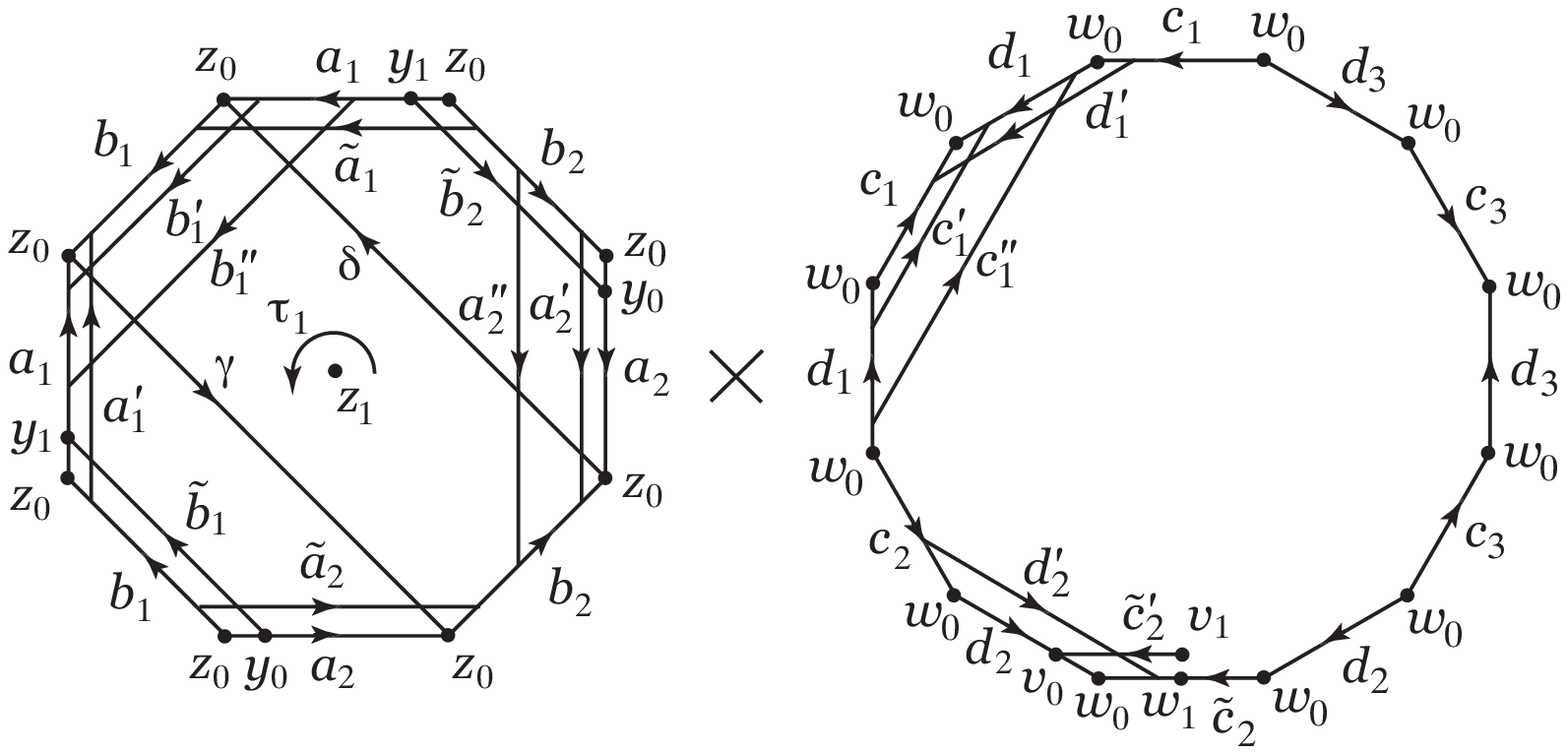}
\end{center}
\caption{Lifts of Lagrangian tori}
\label{fig: lagrangian tori}
\end{figure}

Let $\{a_1,b_1,a_2,b_2\}$ and $\{c_1,d_1,c_2,d_2,c_3,d_3\}$ be the
set of simple closed curves representing the standard generators of
$\pi_1(\Sigma_2,z_0)$ and $\pi_1(\Sigma_3,w_0)$, respectively.
Note that the base point $z_0$ of $\Sigma_2$ is one of the two fixed points of $\tau_1$.
Let $w_0$ be the base point of $\Sigma_3$ and
let $w_1=\tau_2(w_0)$ be as drawn in Figure~\ref{fig: involutions}.
Throughout, we choose $\{z_0\}\times\{w_0\}$ and $q(\{z_0\}\times\{w_0\})$ as the base points of $\pi_1(\Sigma_2\times\Sigma_3)$ and $\pi_1(X)$, respectively.
After isotopy and
by changing the orientations of the curves $a_2$ and $b_2$ if necessary,
we can assume that
$\alpha_{\ast}(a_1\times \{w_0\}) = a_2\times \{w_1\}$ and
$\alpha_{\ast}(b_1\times \{w_0\}) = b_2\times \{w_1\}$.

Let $\tilde{c}_2$ be a path from $w_0$ to $w_1$ in $\Sigma_3$ shown in Figures \ref{fig: involutions} and \ref{fig: lagrangian tori}.  Since $\tau_2$ maps the endpoints of $\tilde{c}_2$ to one another, $q(\{z_0\}\times\tilde{c}_2)$ is a closed path in $X$.
Note that $\tau_2(\tilde{c}_2)$ is a path from $w_1$ to $w_0$, and satisfy $q(\{z_0\}\times \tau_2(\tilde{c}_2))=q(\{z_0\}\times \tilde{c}_2)$ since $z_0$ is a fixed point of $\tau_1$.  It follows that $q_{\ast}(\{z_0\} \times \tilde{c}_2)^2=q_{\ast}(\{z_0\} \times c_2)$ in $\pi_1(X)$.
Similarly, $\tau_2(\tilde{c}_2^{-1})$ is a path from $w_0$ to $w_1$ that traverse along the `bottom' half of the loop $c_2$ in the clockwise direction in Figure~\ref{fig: involutions}.

Note that the loops $a_2\times\{w_0\}$ and $a_2\times\{w_1\}$ are freely homotopic along the path $\{z_0\}\times\tau_2(\tilde{c}_2^{-1})$ in $\Sigma_2\times\Sigma_3$.  Thus we have
\begin{equation}\label{eq: a's}
\begin{split}
q_{\ast}(a_2\times\{w_0\})&= q_{\ast}((\{z_0\}\times\tau_2(\tilde{c}_2^{-1}))\cdot(a_2\times\{w_1\})\cdot(\{z_0\}\times\tau_2(\tilde{c}_2)))
\\
&=q_{\ast}((\{z_0\}\times\tilde{c}_2^{-1})\cdot(a_1\times\{w_0\})\cdot(\{z_0\}\times\tilde{c}_2))
\end{split}
\end{equation}
inside $\pi_1(X)$.
In this paper, the order of path compositions will always be from left to right.
An explicit homotopy $F_1 : [0,1]\times [0,1]\to X$\/ is given by
\begin{equation}\label{eq: homotopy F_1}
F_1(s,t) = \left\{\begin{array}{lll}
q(a_2(0) \times \tau_2(\tilde{c}_2(1-3t))) & {\rm\ if\ } & 0\leq t \leq s/3 , \\[2pt]
q(a_2(\frac{3t-s}{3-2s}) \times \tau_2(\tilde{c}_2(1-s)))  &  {\rm \ if\ } & s/3\leq t \leq (3-s)/3 , \\[2pt]
q(a_2(1) \times \tau_2(\tilde{c}_2(3t-2))) & {\rm \ if\ } & (3-s)/3 \leq t \leq 1,
\end{array}\right.
\end{equation}
where $a_2(t)$ and $\tilde{c}_2(t)$ are parameterizations of the curves $a_2$ and $\tilde{c}_2$ satisfying $a_2(0)=a_2(1)=z_0$, $\tilde{c}_2(0)=w_0$, and $\tilde{c}_2(1)=w_1$.
Similarly, we have
\begin{equation}\label{eq: b's}
q_{\ast}(b_2\times\{w_0\})=
q_{\ast}((\{z_0\}\times\tilde{c}_2^{-1})\cdot(b_1\times\{w_0\})\cdot(\{z_0\}\times\tilde{c}_2))
\end{equation}
inside $\pi_1(X)$ via the based homotopy
\begin{equation}\label{eq: homotopy F_2}
F_2(s,t) = \left\{\begin{array}{lll}
q(b_2(0) \times \tau_2(\tilde{c}_2(1-3t))) & {\rm\ if\ } & 0\leq t \leq s/3 , \\[2pt]
q(b_2(\frac{3t-s}{3-2s}) \times \tau_2(\tilde{c}_2(1-s)))  &  {\rm \ if\ } & s/3\leq t \leq (3-s)/3 , \\[2pt]
q(b_2(1) \times \tau_2(\tilde{c}_2(3t-2))) & {\rm \ if\ } & (3-s)/3 \leq t \leq 1.
\end{array}\right.
\end{equation}

By using based homotopies supported inside $q(\{z_0\}\times\Sigma_3)$,
we also deduce that
\begin{align*}
q_{\ast}(\{z_0\}\times c_3)
&=  q_{\ast}((\{z_0\}\times \tau_2(\tilde{c}_2^{-1}))\cdot( \{z_0\}\times \tau_2(c_1))\cdot(\{z_0\}\times \tau_2(\tilde{c}_2))) \\
&=  q_{\ast}((\{z_0\}\times \tilde{c}_2^{-1})\cdot( \{z_0\}\times c_1)\cdot(\{z_0\}\times \tilde{c}_2)), \\
q_{\ast}(\{z_0\}\times d_3)
&= q_{\ast}((\{z_0\}\times \tau_2(\tilde{c}_2^{-1}))\cdot(\{z_0\}\times \tau_2(d_1))\cdot(\{z_0\}\times \tau_2(\tilde{c}_2))) \\
&= q_{\ast}((\{z_0\}\times\tilde{c}_2^{-1})\cdot(\{z_0\}\times  d_1)\cdot(\{z_0\}\times \tilde{c}_2)).
\end{align*}
Note again that, to go from $w_0$ to $w_1$, we have used the `bottom' half of the loop $c_2$ and traversed it in the clockwise direction in Figure~\ref{fig: involutions}.  This is why we have $q(\{z_0\}\times\tau_2(\tilde{c}_2^{-1}))=q(\{z_0\}\times\tilde{c}_2^{-1})$ first.

The quotient group $\pi_1(X)/q_{\ast}(\pi_1(\Sigma_2\times\Sigma_3))$ is isomorphic to $\Z/2$, and is generated by the coset of $q(\{z_0\}\times \tilde{c}_2)$.
In summary, we have proved the following.

\begin{lem}\label{lem: generators}
The following six loops generate $\pi_1(X)${\rm :}
\begin{gather*}
q(a_1\times\{w_0\}),\ \  q(b_1\times\{w_0\}), \ \ q(\{z_0\}\times c_1),\\
q(\{z_0\}\times d_1),\ \  q(\{z_0\}\times \tilde{c}_2), \ \ q(\{z_0\}\times d_2).
\end{gather*}
\end{lem}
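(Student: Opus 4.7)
The plan is to exploit the short exact sequence
\[
1 \longrightarrow \pi_1(\Sigma_2\times\Sigma_3) \xrightarrow{\ q_\ast\ } \pi_1(X) \longrightarrow \Z/2 \longrightarrow 1
\]
coming from the $\Z/2$-covering $q:\Sigma_2\times\Sigma_3\to X$ associated to the free action of $\alpha$. Since $\pi_1(\Sigma_2\times\Sigma_3)=\pi_1(\Sigma_2)\times\pi_1(\Sigma_3)$ is generated by the ten standard loops $a_1,b_1,a_2,b_2$ in $\Sigma_2$ and $c_1,d_1,c_2,d_2,c_3,d_3$ in $\Sigma_3$, the normal subgroup $q_\ast(\pi_1(\Sigma_2\times\Sigma_3))$ is generated by the $q_\ast$-images of these ten loops. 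The paragraph preceding the lemma already observes that $q(\{z_0\}\times\tilde{c}_2)$ projects to the nontrivial coset in the quotient $\Z/2$, so adjoining this single element produces a generating set of eleven elements for $\pi_1(X)$.

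What remains is to eliminate the five generators not named in the statement, and each elimination has in fact already been performed in this section. First, the identity $q_\ast(\{z_0\}\times c_2)=q_\ast(\{z_0\}\times\tilde{c}_2)^2$ removes $q_\ast(\{z_0\}\times c_2)$. Next, equation~\eqref{eq: a's} expresses $q_\ast(a_2\times\{w_0\})$ as a conjugate of $q_\ast(a_1\times\{w_0\})$ by $q(\{z_0\}\times\tilde{c}_2)$, and equation~\eqref{eq: b's} does the analogous thing for $q_\ast(b_2\times\{w_0\})$. Finally, the two displayed formulas immediately preceding the lemma present $q_\ast(\{z_0\}\times c_3)$ and $q_\ast(\{z_0\}\times d_3)$ as $q(\{z_0\}\times\tilde{c}_2)$-conjugates of $q_\ast(\{z_0\}\times c_1)$ and $q_\ast(\{z_0\}\times d_1)$, respectively. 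After these substitutions, precisely the six loops listed in the statement survive.

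There is no serious obstacle here: the lemma is bookkeeping that repackages derivations already carried out into a clean generating set. The one point that deserves a single line of justification is the initial step, namely that $q_\ast(\pi_1(\Sigma_2\times\Sigma_3))$ together with any chosen lift of the nontrivial element of $\Z/2$ generates $\pi_1(X)$; this is the standard observation that in any short exact sequence $1\to N\to G\to Q\to 1$, the group $G$ is generated by $N$ together with a set of coset representatives for $Q$.
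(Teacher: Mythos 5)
Your proposal is correct and follows essentially the same route as the paper: the paper's ``proof'' of this lemma is precisely the discussion preceding it, which establishes the relations $q_\ast(\{z_0\}\times c_2)=q_\ast(\{z_0\}\times\tilde{c}_2)^2$, equations (\ref{eq: a's}) and (\ref{eq: b's}), and the $\tilde{c}_2$-conjugacy expressions for $c_3$ and $d_3$, together with the observation that the coset of $q(\{z_0\}\times\tilde{c}_2)$ generates the quotient $\pi_1(X)/q_\ast(\pi_1(\Sigma_2\times\Sigma_3))\cong\Z/2$, before concluding ``In summary, we have proved the following.'' Your only addition is to make explicit the standard fact about generating a group from a normal subgroup and coset representatives, which the paper leaves implicit.
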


From now on, we will sometimes abuse notation and write $a_1=q(a_1\times\{w_0\})$ and $c_1=q(\{z_0\}\times c_1)$, etc.
The intersection form of $X$\/ is isomorphic to $7H$, where
\begin{equation}\label{eq: H}
H = \left[\begin{array}{cc}
0&1 \\
1&0
\end{array}
\right].
\end{equation}
Hence $\sigma(X)$, the signature of $X$, is $0$.
A basis for the intersection form of $X$\/ is given by the following seven geometrically dual pairs:
\begin{gather*}
([a_1 \times c_1], -[b_1 \times d_1]), \ \ ([a_1 \times d_1], [b_1 \times c_1]),\\
([a_2 \times c_1], -[b_2 \times d_1]), \ \ ([a_2 \times d_1], [b_2 \times c_1]),\\
([(\tilde{a}_1\tilde{a}_2) \times \tilde{c}_2], -[b_1 \times d_2]), \ \ ([a_1 \times d_2], [(\tilde{b}_1\tilde{b}_2) \times \tilde{c}_2]), \\
([\Sigma_2 \times \{w_0\}], [\{z_0\} \times \Sigma_3]).
\end{gather*}
Here, $[\,\cdot\,]$ denotes the homology class of the image $q(\,\cdot\,)$ in the quotient manifold $X$\/ for short.  Note that even though $[a_1\times\{w_0\}]=[a_2\times\{w_0\}]$ in $H_1(X;\Z)$,  we still have $[a_1 \times c_1]\neq
[a_2 \times c_1]$ since $\alpha_{\ast}[a_1 \times c_1] = [a_2\times c_3]$ in $H_2(\Sigma_2\times \Sigma_3;\Z)$.
The minus signs are there to ensure that the nonzero intersection numbers are $+1$ with respect to standard orientations.  For example,
\begin{align*}
[a_1 \times c_1] \cdot [b_1 \times d_1] &=
(-1)^{\deg(c_1)\deg(b_1)}(a_1\cdot b_1)(c_1\cdot d_1) \\
&= -1\cdot 1\cdot 1 = -1, \\
[a_1 \times d_1] \cdot [b_1 \times c_1] &=
(-1)^{\deg(d_1)\deg(b_1)}(a_1\cdot b_1)(d_1\cdot c_1) \\
&= (-1)\cdot 1 \cdot (-1) = 1.
\end{align*}

The composite loop $\tilde{b}_1\tilde{b}_2$ starts at a point $y_0$ on $a_2$ and traverses first along $\tilde{b}_1$ to the point $y_1=\tau_1(y_0)$ on $a_1$
and then along $\tilde{b}_2$ back to the starting point $y_0$.  See Figure~\ref{fig: lagrangian tori}.
We define the loop $\tilde{a}_1\tilde{a}_2$ in a similar manner.
Both loops $\tilde{a}_1\tilde{a}_2$ and $\tilde{b}_1\tilde{b}_2$ are mapped to themselves under $\tau_1$.  Since the endpoints of $\tilde{c}_2$ are mapped to each other under $\tau_2$, the cylinders $(\tilde{a}_1\tilde{a}_2) \times \tilde{c}_2$ and $(\tilde{b}_1\tilde{b}_2) \times \tilde{c}_2$ become closed tori in $X$.

As observed in Example~2 of \cite{hacon-pardini}, it is convenient to view $X$\/ as
the total space of a genus 2 surface bundle over a genus 2 surface:
\begin{equation}\label{eq: bundle}
X=\frac{\Sigma_2\times\Sigma_3}{\langle\alpha\rangle}
\stackrel{f}{\longrightarrow} \frac{\Sigma_3}{\langle\tau_2\rangle} =\Sigma_2.
\end{equation}
The quotients $q(\Sigma_2 \times \{w_0\})$ and $q(\{z_0\} \times \Sigma_3)$ are genus 2 surfaces in $X$ that form a fiber and a section of this bundle.
Thus $([\Sigma_2 \times \{w_0\}], [\{z_0\} \times \Sigma_3])$ is represented by a pair of transversely intersecting genus 2 symplectic surfaces in $X$.

\begin{lem}\label{lem: infinite order}
The loops $q(\{z_0\}\times c_1)$,
$q(\{z_0\}\times d_1)$, $q(\{z_0\}\times \tilde{c}_2)$ and $q(\{z_0\}\times d_2)$
represent elements of infinite order in $\pi_1(X)$.
\end{lem}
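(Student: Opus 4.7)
The plan is to leverage the fact that the quotient $q : \Sigma_2 \times \Sigma_3 \to X$ is a genuine covering map. Since $\tau_2$ is fixed-point free, the diagonal involution $\alpha = (\tau_1,\tau_2)$ acts freely on $\Sigma_2 \times \Sigma_3$, so $q$ is a $2$-fold covering. Consequently the induced map
\[
q_{\ast} : \pi_1(\Sigma_2 \times \Sigma_3,\, (z_0,w_0)) \longrightarrow \pi_1(X,\, q(z_0,w_0))
\]
is injective, realizing $\pi_1(\Sigma_2\times\Sigma_3)$ as an index-$2$ subgroup of $\pi_1(X)$. Thus, to verify that a loop of the form $q(\{z_0\}\times\gamma)$ has infinite order in $\pi_1(X)$, it suffices to show that $\{z_0\}\times\gamma$ has infinite order in $\pi_1(\Sigma_2\times\Sigma_3) \cong \pi_1(\Sigma_2)\times\pi_1(\Sigma_3)$.

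For three of the four loops this is immediate: each of $c_1$, $d_1$, $d_2$ is a standard generator of the surface group $\pi_1(\Sigma_3,w_0)$ and hence an element of infinite order. Therefore $\{z_0\}\times c_1$, $\{z_0\}\times d_1$ and $\{z_0\}\times d_2$ are of infinite order in $\pi_1(\Sigma_2\times\Sigma_3)$, and injectivity of $q_{\ast}$ transfers this to $\pi_1(X)$.

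The only loop that requires a separate argument is $q(\{z_0\}\times\tilde{c}_2)$, because $\tilde{c}_2$ is a path (from $w_0$ to $w_1$) rather than a closed loop in $\Sigma_3$, so it does not lift to an element of $\pi_1(\Sigma_2\times\Sigma_3)$. However, the relation $q_{\ast}(\{z_0\}\times\tilde{c}_2)^2 = q_{\ast}(\{z_0\}\times c_2)$ in $\pi_1(X)$ has already been established in the paragraph preceding equation~\eqref{eq: a's}. Since $c_2$ is a standard generator of $\pi_1(\Sigma_3,w_0)$, the previous paragraph shows $q_{\ast}(\{z_0\}\times c_2)$ has infinite order in $\pi_1(X)$; any element whose square has infinite order must itself have infinite order, so $q_{\ast}(\{z_0\}\times\tilde{c}_2)$ does as well.

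There is essentially no obstacle here: the proof is just the combination of freeness of the $\Z/2$-action (giving injectivity on $\pi_1$) with a classical fact about surface groups, plus the $\tilde{c}_2^{\,2} = c_2$ relation already in hand. The only point that deserves emphasis is the necessity of the covering-space argument, since a priori elements of $\pi_1(\Sigma_2\times\Sigma_3)$ could have been killed in passing to the quotient if the action had not been free.
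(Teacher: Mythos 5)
Your proof is correct, but it takes a genuinely different route from the paper's. You exploit the freeness of the $\Z/2$-action: $q:\Sigma_2\times\Sigma_3\to X$\/ is a double cover, so $q_{\ast}$ is injective, and the loops $\{z_0\}\times c_1$, $\{z_0\}\times d_1$, $\{z_0\}\times d_2$ inherit infinite order from the torsion-free surface group $\pi_1(\Sigma_3)$; the remaining loop $q(\{z_0\}\times\tilde{c}_2)$, which does not lift to a loop upstairs, is handled separately via the already-established relation $q_{\ast}(\{z_0\}\times\tilde{c}_2)^2=q_{\ast}(\{z_0\}\times c_2)$, using that an element with a square of infinite order has infinite order. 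The paper instead uses the bundle structure (\ref{eq: bundle}): the homotopy exact sequence of the fibration $f:X\to\Sigma_3/\langle\tau_2\rangle=\Sigma_2$ yields a surjection $f_{\ast}:\pi_1(X)\to\pi_1(\Sigma_2)$ carrying all four loops to standard generators of the torsion-free base surface group, so none can be torsion. The paper's argument is more uniform --- $\tilde{c}_2$ needs no special treatment because it descends to an honest generator of the quotient surface $\Sigma_3/\langle\tau_2\rangle$ --- at the cost of invoking the fibration; yours uses only covering-space theory plus the square relation, at the cost of the extra case. Both are complete proofs.
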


\begin{proof}
From the homotopy long exact sequence for a fibration, we obtain
\begin{equation*}
0 \longrightarrow \pi_1(\Sigma_2) \longrightarrow \pi_1(X)
\stackrel{f_{\ast}}{\longrightarrow} \pi_1(\Sigma_2) \longrightarrow 0,
\end{equation*}
where $f_{\ast}$ is the homomorphism induced by the bundle map
in (\ref{eq: bundle}).
$f_{\ast}$ maps the loops in our lemma to the standard generators of
$\pi_1(\Sigma_2)$ and hence these loops cannot be torsion elements of
$\pi_1(X)$.
\end{proof}

\section{Construction of exotic $S^2\times S^2$}
\label{sec: s2xs2}

Choose $\tau_1$ and $\tau_2$ invariant volume forms on $\Sigma_2$ and $\Sigma_3$, respectively.
By pushing forward the sum of pullbacks of these volume forms on $\Sigma_2\times\Sigma_3$ under $q$,
we obtain a symplectic form $\omega$\/ on $X$.
Alternatively, we can equip $X$\/ with a symplectic form coming from the bundle structure in (\ref{eq: bundle}).
Now consider the following six Lagrangian tori in $X$:
\begin{equation}\label{eq: 6 tori}
\begin{array}{ccc}
q(a_1' \times c_1'),  &
q(b_1' \times c_1''), &
q(a_2' \times c_1'),\\[2pt]
q(a_2'' \times d_1'), &
q(b_1'' \times d_2'), &
q((\tilde{b}_1\tilde{b}_2) \times \tilde{c}_2').
\end{array}
\end{equation}
The lifts of these Lagrangian tori in $\Sigma_2\times \Sigma_3$ are drawn in Figure~\ref{fig: lagrangian tori}.
The prime and double prime notations are explained in \cite{FPS}.
Each tori in (\ref{eq: 6 tori}) is Lagrangian with respect to $\omega$\/ since
the first circle factor lies in the $\Sigma_2$ direction
whereas the second circle factor lies in the $\Sigma_3$ direction.

The $\tilde{c}_2'$ path that is `parallel' to $\tau_2(\tilde{c}_2)$ is drawn in Figures \ref{fig: lagrangian tori} and \ref{fig: c_2}.
The endpoints of $\tilde{c}_2'$, $v_0$ and $v_1$, are mapped to each other by $\tau_2$.  Thus the cylinder $(\tilde{b}_1\tilde{b}_2) \times \tilde{c}_2'$ in $\Sigma_2\times\Sigma_3$ becomes a closed torus in $X$.
Let $\tilde{b}_1(t)$, $\tilde{b}_2(t)$ and $\tilde{c}_2'(t)$ be parameterizations of the curves $\tilde{b}_1$,
$\tilde{b}_2$ and $\tilde{c}_2'$, respectively, satisfying $\tilde{b}_1(0)=y_0$, $\tilde{b}_1(1)=y_1$,
$\tilde{b}_2(0)=y_1$, $\tilde{b}_2(1)=y_0$, $\tilde{c}_2'(0)=v_1$ and $\tilde{c}_2'(1)=v_0$.  
Note that $q(\{z\}\times \tilde{c}_2')$ is not a closed loop for any point 
$z\in \tilde{b}_1\tilde{b}_2$ since there is no fixed point of $\tau_1$ 
on $\tilde{b}_1\tilde{b}_2$.  
However, the following composition of paths gives rise to a simple closed curve on 
the $q((\tilde{b}_1\tilde{b}_2) \times \tilde{c}_2')$ torus:
\begin{equation}\label{eq: beta}
\beta(t) = \left\{\begin{array}{lll}
\{y_1\} \times \tilde{c}_2'(2t) & {\rm\ if\ } & 0\leq t \leq 1/2, \\[2pt]
\tilde{b}_2(2t-1) \times \{v_0\}  &  {\rm \ if\ } & 1/2\leq t \leq 1.
\end{array}\right.
\end{equation}
Since $q(\{y_1\} \times \{v_1\})=q(\{y_0\} \times \{v_0\})$ when $t=0,1$, the loop
$q(\beta(t))$ is well defined.  

Let $\eta$\/ be a short `diagonal' path on the $a_1\times \tau_2(d_2)$ torus from 
$\{z_0\} \times \{w_1\}$ to $\{y_1\} \times \{v_1\}$.  
Then $\alpha(\eta)$ is a path on the $a_2\times d_2$ torus from $\{z_0\} \times \{w_0\}$ to $\{y_0\} \times \{v_0\}$.  
Since $q(\{z_0\} \times \{w_1\})=q(\{z_0\} \times \{w_0\})$, 
the image $q(\eta \cdot \beta \cdot \alpha(\eta)^{-1})$ represents 
$\tilde{c}_2 b_2$ in $\pi_1(X)$.
Next consider the composition
\begin{equation}\label{eq: xi}
\xi(t) = \left\{\begin{array}{lll}
\{y_1\} \times \tilde{c}_2'(4t) & {\rm\ if\ } & 0\leq t \leq 1/4, \\[2pt]
\tilde{b}_2(4t-1) \times \{v_0\}  &  {\rm \ if\ } & 1/4\leq t \leq 1/2, \\[2pt]
\tilde{b}_1(4t-2) \times \{v_0\}  &  {\rm \ if\ } & 1/2\leq t \leq 3/4, \\[2pt]
\{y_1\} \times \tilde{c}_2'(4-4t)  &  {\rm \ if\ } & 3/4\leq t \leq 1.
\end{array}\right.
\end{equation}
Note that $q(\eta \cdot \xi \cdot \eta^{-1})$ represents $\tilde{c}_2 b_2 b_1 \tilde{c}_2^{-1}
= b_1 b_2$ in $\pi_1(X)$.  

Both paths $\beta$\/ and $\xi$\/ begin at the point $\{y_1\}\times\{v_1\}$,
and their images under $q$\/ represent standard generators for the image of the fundamental group of the
$q((\tilde{b}_1\tilde{b}_2) \times \tilde{c}_2')$ torus in $X$\/ that are
based at $q(\{y_1\} \times \{v_1\})$.
In particular, the words for the $q((\tilde{b}_1\tilde{b}_2) \times \tilde{c}_2')$ torus
in $\pi_1(X)$ are given by the conjugates of
\begin{equation*}
(\tilde{c}_2 b_2)^{-1} (\tilde{c}_2 b_2 b_1 \tilde{c}_2^{-1}) (\tilde{c}_2 b_2)
(\tilde{c}_2 b_2 b_1 \tilde{c}_2^{-1})^{-1} =
b_1 b_2 \tilde{c}_2 b_1^{-1} b_2^{-1} \tilde{c}_2^{-1}.
\end{equation*}

\begin{figure}[ht]
\begin{center}
\includegraphics[scale=.55]{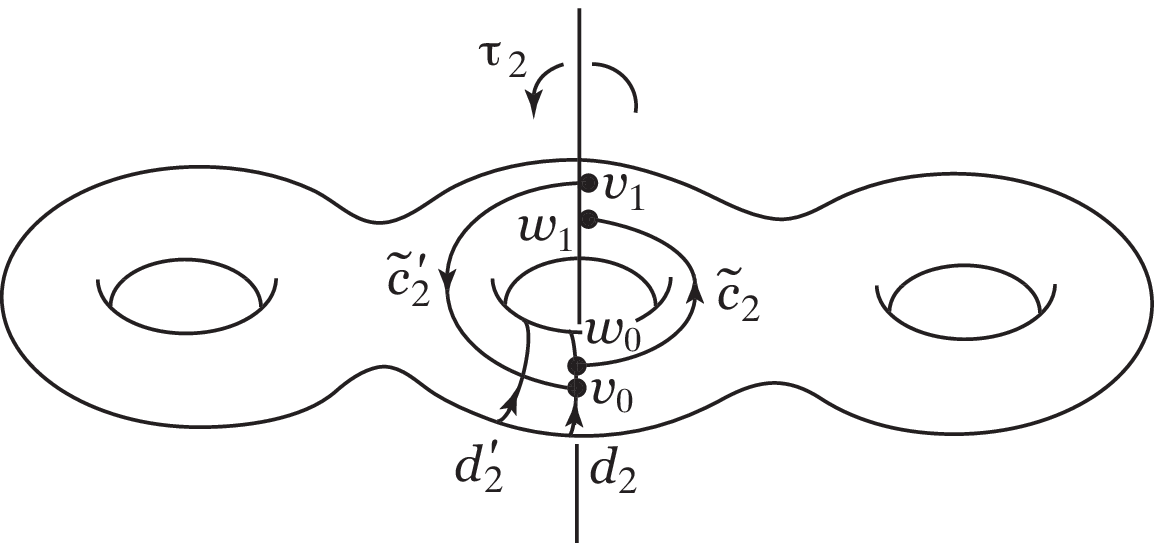}
\end{center}
\caption{$\tilde{c}_2'$ and $d_2'$}
\label{fig: c_2}
\end{figure}

In what follows, for the sake of brevity,
we will sometimes abuse notation and blur the distinction between a surface or a curve
in $\Sigma_2\times \Sigma_3$ and its image under $q$\/ in the quotient manifold $X$.
We will use the notation $[g,h]=ghg^{-1}h^{-1}$ for the commutator.

\begin{lem}\label{lem: framings}
Let $X_0$ denote the complement of tubular neighborhoods of the six Lagrangian tori of\/ $(\ref{eq: 6 tori})$ in $X$.
The Lagrangian framings give the following bases for the images of the fundamental groups of the $3$-torus components of the boundary $\partial X_0$.
\begin{equation}\label{eq: framings}
\begin{array}{cc}
\{ a_1 ,\, c_1 ;\, [b_1^{-1}, d_1^{-1}] \}, &
\{ b_1 ,\, d_1 c_1 d_1^{-1} ;\, [a_1^{-1}, d_1] \}, \\[2pt]
\{ a_2 ,\, c_1 ;\, [b_2^{-1}, d_1^{-1}] \}, &
\{ b_2 a_2 b_2^{-1} ,\, d_1 ;\, [b_2, c_1^{-1}] \}, \\[2pt]
\{ a_1 b_1 a_1^{-1} ,\, d_2 ;\,
\tilde{c}_2^{-1}  a_1 a_2 \tilde{c}_2 a_1^{-1} a_2^{-1} \}, &
\{ \tilde{c}_2 b_2 b_1 \tilde{c}_2^{-1},\, \tilde{c}_2 b_2 ;\, [a_1, \tilde{c}_2 d_2^{-1} \tilde{c}_2^{-1}] \}.
\end{array}
\end{equation}
\end{lem}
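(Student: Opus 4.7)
The plan is to apply Weinstein's tubular neighborhood theorem to each of the six Lagrangian tori, identifying a neighborhood of $T$ with a neighborhood of the zero section in $T^*T^2$. The boundary $\partial\nu(T)$ is then a 3-torus whose $\pi_1$ has three canonical generators: two Lagrangian pushoffs of the $S^1$-factors of $T$, and one meridian loop in the normal $D^2$-direction. The task reduces to expressing each of these three generators as a specific element of $\pi_1(X_0)$ based at $q(\{z_0\}\times\{w_0\})$, which I would do torus by torus.

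I would begin with the first four tori, each of which is a product $q(\gamma_1\times\gamma_2)$ with $\gamma_1\subset\Sigma_2$ and $\gamma_2\subset\Sigma_3$. For the two surface generators I would unpack the prime/double-prime convention of \cite{FPS}: a single prime marks a pushoff along a basepoint-connecting path that avoids the relevant geometric dual $\delta$, yielding the loop $\gamma_i$ itself, while a double prime marks a pushoff in the direction of $\delta$, so the connecting path crosses $\delta$ once and the resulting generator is $\delta\gamma_i\delta^{-1}$. When the curve in question is $a_2$ or $b_2$, the conjugations implicit in (\ref{eq: a's}) and (\ref{eq: b's}) via the homotopies $F_1,F_2$ are used to rebase to $q(\{z_0\}\times\{w_0\})$. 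For the meridian I would invoke the standard local model: in a Darboux chart around a point $p\in T$ where the geometric duals $\delta_1$ to $\gamma_1$ and $\delta_2$ to $\gamma_2$ cross $T$ transversely, a small meridian circle in the normal plane decomposes into four arcs running along $\delta_1^{\pm1},\delta_2^{\pm1},\delta_1^{\mp1},\delta_2^{\mp1}$. After conjugation to the global basepoint along the path through $\delta_1,\delta_2$, this meridian becomes the commutator of the two duals with signs fixed by orientation choices, giving $[b_1^{-1},d_1^{-1}]$, $[a_1^{-1},d_1]$, $[b_2^{-1},d_1^{-1}]$, and $[b_2,c_1^{-1}]$ for the first four entries of (\ref{eq: framings}).

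The fifth and sixth tori require more care. For $q(b_1''\times d_2')$, the dual to $d_2$ on $\Sigma_3$ is $c_2$, and in $\pi_1(X)$ one has $c_2=\tilde{c}_2^2$; the two geometric intersection points of $d_2$ with $c_2$ are $w_0$ and $w_1$, which are identified by $\tau_2$. I would run the local commutator calculation at both intersection points and use the identity $a_2=\tilde{c}_2^{-1}a_1\tilde{c}_2$ from (\ref{eq: a's}) to match conventions, which produces the nonstandard expression $\tilde{c}_2^{-1}a_1a_2\tilde{c}_2a_1^{-1}a_2^{-1}$ rather than a single commutator of dual loops. For $q((\tilde{b}_1\tilde{b}_2)\times\tilde{c}_2')$, the two surface generators are already packaged in the loops $\beta$ and $\xi$ from (\ref{eq: beta})--(\ref{eq: xi}): after conjugation along the diagonal path $\eta$ to the global basepoint, these give $\tilde{c}_2 b_2$ and $\tilde{c}_2 b_2 b_1\tilde{c}_2^{-1}$. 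The meridian $[a_1,\tilde{c}_2d_2^{-1}\tilde{c}_2^{-1}]$ then arises by identifying the correct dual curves in $X$: with respect to the bundle (\ref{eq: bundle}), the geometric dual of $\tilde{b}_1\tilde{b}_2$ descends to $a_1$ on the base $\Sigma_2$, while the dual of $\tilde{c}_2'$ is obtained by conjugating $d_2^{-1}$ by $\tilde{c}_2$ to account for the shift of fiber basepoint from $w_0$ to $v_0$.

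The main obstacle I anticipate is (i) tracking orientations and basepoint-path conjugations consistently across six parallel computations, and (ii) handling the genuinely non-product nature of the last two tori, where either a dual curve has two identified intersection points with its partner or the cylinder closes up only after passage through the involutions. The homotopies $F_1,F_2$ of Section~\ref{sec: model} and the explicit paths $\beta,\xi,\eta$ introduced in the current section are exactly the combinatorial tools needed, so the argument should amount to a careful but mechanical bookkeeping exercise once the local meridian-as-commutator model is set up.
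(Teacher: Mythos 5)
Your proposal follows essentially the same route as the paper's proof: the two surface directions of each boundary $3$-torus are Lagrangian push-offs (with the prime/double-prime conjugation conventions of \cite{FPS} and the rebasing homotopies $F_1,F_2$), and each meridian is read off as the boundary word of a punctured geometrically dual surface, conjugated back to the basepoint $q(\{z_0\}\times\{w_0\})$. The only difference is one of explicitness at the fifth and sixth tori, where the paper pins down the dual surfaces concretely as the closed tori $q((\tilde{a}_1\tilde{a}_2)\times\tilde{c}_2)$ and $q(a_2\times d_2)$ and tracks the direction of approach to each surgery torus, whereas you phrase the same bookkeeping in terms of the identified intersection points and the fiber-basepoint shift; both yield the stated words.
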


\begin{proof}
The point $q(\{z_0\}\times\{w_0\})$ lies in $X_0$ and 
we choose it for the base point of $\pi_1(X_0)$.  
The first four triples in (\ref{eq: framings}) are in standard form 
and can be derived as in \cite{FPS}.
For the fifth triple corresponding to $q(b_1'' \times d_2')$, the Lagrangian push-offs of $b_1''$ and $d_2'$ represent $a_1 b_1 a_1^{-1}$ and $d_2$, respectively, by a standard argument in \cite{FPS}.
The orientation convention for the boundary $\partial X_0$ dictates that the third member of our triple (after the semicolon) should be the `clockwise' meridian of $q(b_1'' \times d_2')$.
(The anti-clockwise meridian is usually reserved for the boundary of the tubular neighborhood of $q(b_1'' \times d_2')$.)
Note that $q(b_1'' \times d_2')$ torus intersects $q((\tilde{a}_1\tilde{a}_2) \times \tilde{c}_2)$ torus once negatively in $X$.
Hence the clockwise meridian of $q(b_1'' \times d_2')$ is given by a word for the punctured $q((\tilde{a}_1\tilde{a}_2) \times \tilde{c}_2)$ torus, read in the anti-clockwise direction.
To reach $b_1'' \times d_2'$ torus from the preimage of the base point $\{z_0\}\times\{w_0\}$, we need to travel positively in the $a_1\times\{w_0\}$ direction and negatively in the $\{z_0\} \times \tau_2(\tilde{c}_2)$ direction.
As explained in \cite{FPS}, the clockwise meridian of $q(b_1'' \times d_2')$ 
is then given by $q(\tilde{c}_2^{-1}(a_1a_2)\tilde{c}_2(a_2a_1)^{-1})$, coming from the punctured $q((\tilde{a}_1\tilde{a}_2) \times \tilde{c}_2)$ torus.  See the left half of Figure~\ref{fig: torus}.

\begin{figure}[ht]
\begin{center}
\includegraphics[scale=.65]{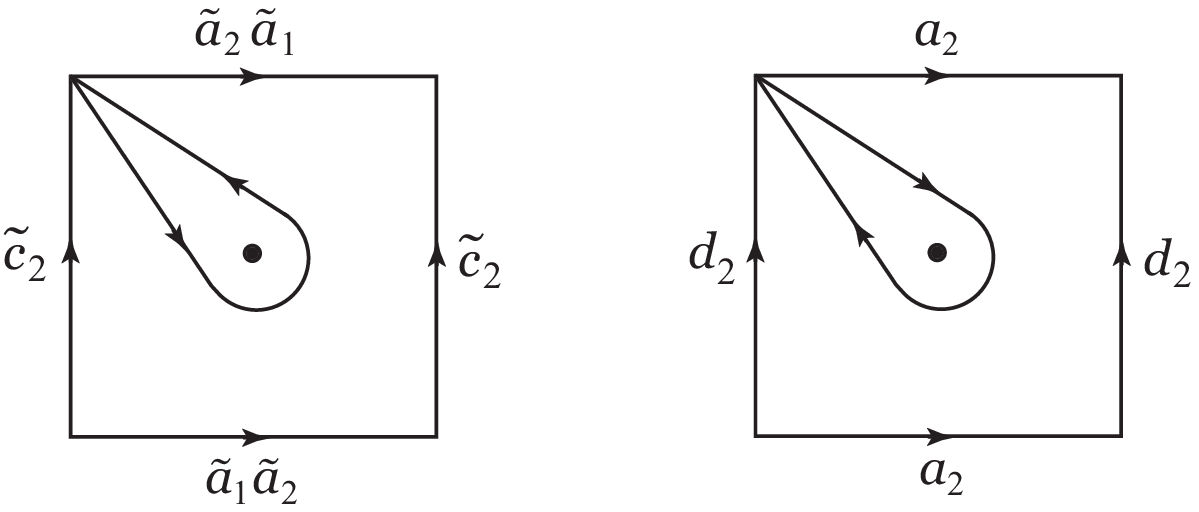}
\end{center}
\caption{Punctured $q((\tilde{a}_1\tilde{a}_2) \times \tilde{c}_2)$ and
$q(a_2\times d_2)$ tori}
\label{fig: torus}
\end{figure}

For the sixth triple, it is clear that the Lagrangian push-offs of
$q(\beta)$ and $q(\xi)$ (see (\ref{eq: beta}) and (\ref{eq: xi})) represent
the homotopy classes of
\begin{equation*}
q((\{z_0\}\times\tau_2(\tilde{c}_2))\cdot(b_2 \times\{w_0\}))
=q(\{z_0\}\times \tilde{c}_2)\cdot q(b_2 \times\{w_0\}),
\end{equation*}
and
\begin{align*}
&q((\{z_0\}\times\tau_2(\tilde{c}_2))\cdot (b_2 \times\{w_0\})\cdot
(b_1 \times\{w_0\})\cdot (\{z_0\}\times \tau_2(\tilde{c}_2^{-1})))\\
&=q(\{z_0\}\times \tilde{c}_2)\cdot q(b_2 \times\{w_0\})\cdot
q(b_1 \times\{w_0\})\cdot q(\{z_0\}\times \tilde{c}_2^{-1}),
\end{align*}
respectively, in $\pi_1(X_0)$.
Since $\beta$\/ and $\xi$\/ both start at $\{y_1\}\times\{v_1\}$, 
the lift of our meridian must start at the nearby preimage of the basepoint $\{z_0\}\times\{w_1\}$, 
rather than starting at $\{z_0\}\times\{w_0\}$.  
(Note that $\{z_0\}\times\{w_0\}$ lies near $\{y_0\}\times\{v_0\}$.)  

The $(\tilde{b}_1\tilde{b}_2) \times \tilde{c}_2'$ cylinder intersects the 
$a_2\times d_2$ torus once positively at $\{y_0\}\times\{v_0\}$.  
Thus a lift of the clockwise meridian of $q((\tilde{b}_1\tilde{b}_2) \times \tilde{c}_2')$, that 
starts at $\{z_0\}\times\{w_0\}$, is given by a word for the punctured $a_2\times d_2$ torus, read in the clockwise direction.
To reach $(\tilde{b}_1\tilde{b}_2) \times \tilde{c}_2'$ cylinder from the point $\{z_0\}\times\{w_0\}$, we need to travel positively in the $a_2\times\{w_0\}$ direction and negatively in the $\{z_0\} \times d_2$ direction.  
Hence a lift of the clockwise meridian of $q((\tilde{b}_1\tilde{b}_2) \times \tilde{c}_2')$ torus, 
that starts at $\{z_0\}\times\{w_0\}$, is given by $a_2 d_2^{-1} a_2^{-1} d_2=[a_2, d_2^{-1}]$, 
coming from the punctured $a_2\times d_2$ torus.
See the right half of Figure~\ref{fig: torus}.  

Now the path $\{z_0\}\times \tau_2(\tilde{c}_2)$ starts at $\{z_0\}\times\{w_1\}$,
ends at $\{z_0\}\times\{w_0\}$, and is very close and parallel to the $\{y_1\} \times \tilde{c}_2'$ part of the 
$q((\tilde{b}_1\tilde{b}_2) \times \tilde{c}_2')$ torus.  
Since the required lift of our meridian must begin at $\{z_0\}\times\{w_1\}$, 
this lift must be the conjugate of $[a_2, d_2^{-1}]$ by $\{z_0\}\times \tau_2(\tilde{c}_2)$, 
whose image in $\pi_1(X_0)$ is given by 
$\tilde{c}_2 [a_2, d_2^{-1}]\tilde{c}_2^{-1}=[a_1, \tilde{c}_2 d_2^{-1} \tilde{c}_2^{-1}]$.  
Note that the identity $a_2 = \tilde{c}_2^{-1} a_1 \tilde{c}_2$ continues to hold in $\pi_1(X_0)$ 
since the image of homotopy (\ref{eq: homotopy F_1}) is contained in $X_0$ 
(cf.~the proof of Lemma~\ref{lem: presentation} below).  
\end{proof}

Let $n\geq 1$ and $p\geq 0$ be a pair of integers.
Inside $X$, we perform the following six torus surgeries:
\begin{equation}\label{eq: 6 surgeries}
\begin{array}{cc}
(a_1' \times c_1', a_1', -n), &
(b_1' \times c_1'', b_1', -1), \\[2pt]
(a_2' \times c_1', c_1', -1), &
(a_2'' \times d_1', d_1', -1),\\[2pt]
(b_1'' \times d_2', d_2', -1/p), &
((\tilde{b}_1\tilde{b}_2) \times \tilde{c}_2', \beta, -1).
\end{array}
\end{equation}
Here, we are using the notation from \cite{FPS, ABP}.  
For example, the first surgery is a $(-n)$-surgery on $q(a_1' \times c_1')$ torus
along $q(a_1')$ loop
with respect to the Lagrangian framing in (\ref{eq: framings}).
The first and the fifth surgeries are Luttinger surgeries (cf.~\cite{luttinger, ADK}) when $n=1$ and $p\geq 1$, respectively.
If $p=0$, then the fifth surgery is trivial, i.e., the surgery does not alter the $4$-manifold.
The other four surgeries with $-1$ coefficient are all Luttinger surgeries.

Let $\M$ denote the resulting closed $4$-manifold after the surgeries in (\ref{eq: 6 surgeries}).
If $p\geq 1$, then we have
\begin{equation*}
b_1(\M)=b_1(X)-6=0 \quad \text{and} \quad
b_2(\M)=b_2(X)-2\cdot 6=2.
\end{equation*}
The intersection form of $\M$ is even for every $n\geq 1$ and $p\geq 0$.
Note that
$\sM$ is a minimal symplectic $4$-manifold for every $p\geq 0$.

\section{Calculation of fundamental group}
\label{sec: pi_1}

The point $q(\{z_0\}\times\{w_0\})$ lies in $X_0\subset \M$ and hence we will choose it for the base point of $\pi_1(\M)$.

\begin{lem}\label{lem: presentation}
$\pi_1(\M)$ is generated by $a_1,b_1,a_2,b_2,c_1,d_1,\tilde{c}_2,d_2$.
The following relations hold in $\pi_1(\M)${\rm :}
\begin{gather*}
a_2=\tilde{c}_2^{-1} a_1\tilde{c}_2, \ \
b_2=\tilde{c}_2^{-1} b_1 \tilde{c}_2, \ \
b_1=\tilde{c}_2^{-1} b_2 \tilde{c}_2, \\
[b_2,d_2]=1, \ \
[a_1^{-1}b_1^{-1}a_2,d_2]=1, \ \
[a_2^{-1}b_2^{-1}a_1,d_2]=1, \\
[b_1^{-1}, d_1^{-1}]^n=a_1,\ \  [a_1^{-1}, d_1]=b_1,\\
[b_2^{-1}, d_1^{-1}]=c_1,\ \
[b_2, c_1^{-1}]=d_1,\\
\tilde{c}_2^{-1} a_1 a_2 \tilde{c}_2 a_1^{-1} a_2^{-1}=d_2^p, \ \
[a_1, \tilde{c}_2 d_2^{-1} \tilde{c}_2^{-1}]=\tilde{c}_2 b_2,\\
[a_1,c_1]=1, \ \ [b_1,c_1]=1,\ \ [a_2,c_1]=1, \ \
[a_2,d_1]=1, \ \ [b_1,d_2]=1,\\
[a_1,b_1][a_2,b_2]=1, \ \
[c_1,d_1][\tilde{c}_2,d_2]=1.
\end {gather*}
\end{lem}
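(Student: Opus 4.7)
My plan is to realize $\pi_1(\M)$ via Seifert--van Kampen as $\pi_1(X_0)$ modulo the six relations introduced by the surgeries (\ref{eq: 6 surgeries}), and then to identify each displayed relation either as one already holding in $\pi_1(X_0)$ or as a relation imposed by one of the surgeries. By Lemma~\ref{lem: generators}, the classes $a_1,b_1,c_1,d_1,\tilde{c}_2,d_2$ already generate $\pi_1(X)$; the six meridians listed in (\ref{eq: framings}) are words in these six classes together with $a_2$ and $b_2$, so no new generators are required in passing from $\pi_1(X)$ to $\pi_1(X_0)$, and adjoining $a_2=\tilde{c}_2^{-1}a_1\tilde{c}_2$ and $b_2=\tilde{c}_2^{-1}b_1\tilde{c}_2$ as redundant generators produces the eight-generator list in the statement.

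The three conjugation relations $a_2=\tilde{c}_2^{-1}a_1\tilde{c}_2$, $b_2=\tilde{c}_2^{-1}b_1\tilde{c}_2$, and $b_1=\tilde{c}_2^{-1}b_2\tilde{c}_2$ are exactly (\ref{eq: a's}), (\ref{eq: b's}), and the symmetric version of (\ref{eq: b's}) obtained by routing the free homotopy along the `bottom half' of $c_2$ highlighted in the paragraph following (\ref{eq: b's}). Alternatively, the third relation is a formal consequence of the second together with $[\tilde{c}_2^2,b_2]=1$, which holds because $\tilde{c}_2^2=c_2$ and $\{z_0\}\times c_2$ commutes with $b_2\times\{w_0\}$ in $\Sigma_2\times\Sigma_3$. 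The homotopies $F_1$ and $F_2$ of (\ref{eq: homotopy F_1})--(\ref{eq: homotopy F_2}) may be arranged to miss the six Lagrangian tori, as was already observed inside the proof of Lemma~\ref{lem: framings}, so these identities in fact hold in $\pi_1(X_0)$ and descend to $\pi_1(\M)$.

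The commutator relations $[a_1,c_1]=[b_1,c_1]=[a_2,c_1]=[a_2,d_1]=[b_1,d_2]=[b_2,d_2]=1$ all come from the product structure of $\Sigma_2\times\Sigma_3$: each of $a_1,b_1,a_2,b_2$ is a based loop through $z_0$ in the $\Sigma_2$-factor and each of $c_1,d_1,d_2$ is a based loop through $w_0$ in the $\Sigma_3$-factor, so the corresponding classes commute already in $\pi_1(\Sigma_2\times\Sigma_3,(z_0,w_0))$ and then in $\pi_1(X_0)$. The two derivable relations $[a_1^{-1}b_1^{-1}a_2,d_2]=1$ and $[a_2^{-1}b_2^{-1}a_1,d_2]=1$ follow formally, since by the same product argument $d_2$ commutes with each of $a_1,b_1,a_2,b_2$. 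The two surface relations $[a_1,b_1][a_2,b_2]=1$ and $[c_1,d_1][\tilde{c}_2,d_2]=1$ are the defining relations of the fundamental groups of the genus 2 fiber $q(\Sigma_2\times\{w_0\})$ and the genus 2 section $q(\{z_0\}\times\Sigma_3)$ of the bundle (\ref{eq: bundle}), both isotoped slightly off the six surgery tori so as to embed in $X_0$.

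The six surgery relations are then read off from (\ref{eq: 6 surgeries}) and Lemma~\ref{lem: framings} using the standard convention that a torus surgery $(T,\gamma,-n/m)$ with Lagrangian framing triple $\{\lambda_1,\lambda_2;\mu\}$ imposes $\gamma^{m}=\mu^{n}$ in the fundamental group of the complement: this yields $a_1=[b_1^{-1},d_1^{-1}]^n$ from the first surgery; $b_1=[a_1^{-1},d_1]$, $c_1=[b_2^{-1},d_1^{-1}]$ and $d_1=[b_2,c_1^{-1}]$ from the next three surgeries with coefficient $-1$; $\tilde{c}_2^{-1}a_1a_2\tilde{c}_2a_1^{-1}a_2^{-1}=d_2^{\,p}$ from the $(b_1''\times d_2',d_2',-1/p)$ surgery; and $[a_1,\tilde{c}_2d_2^{-1}\tilde{c}_2^{-1}]=\tilde{c}_2b_2$ from the final surgery once $\gamma=\beta$ is identified with $\tilde{c}_2b_2$ via the $\beta$-computation preceding Lemma~\ref{lem: framings}. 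The hardest part, I expect, is the bookkeeping of base points and orientations: the meridian words of (\ref{eq: framings}) were computed relative to particular lifts of the base point (sometimes $\{z_0\}\times\{w_1\}$ rather than $\{z_0\}\times\{w_0\}$) and conjugated by paths involving $\tilde{c}_2$, and one must check throughout that the various homotopies witnessing the conjugation and commutator relations can be made disjoint from the six Lagrangian tori, so that all of the listed identities hold simultaneously inside $\pi_1(X_0)$ before any surgery is performed.
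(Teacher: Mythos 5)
Your overall framework is the same as the paper's: Seifert--van Kampen applied to $X_0$ and the six surgery pieces, generators from Lemma~\ref{lem: generators}, the surgery relations read off from Lemma~\ref{lem: framings} with the correct $(-n/m)$ convention, and the surface relations from the genus~2 fiber and section. Your treatment of the three conjugation relations and of the surgery relations is fine. But there is one genuine error: you claim that $[a_1^{-1}b_1^{-1}a_2,d_2]=1$ and $[a_2^{-1}b_2^{-1}a_1,d_2]=1$ ``follow formally, since by the same product argument $d_2$ commutes with each of $a_1,b_1,a_2,b_2$.'' The product argument only proves commutation in $\pi_1(\Sigma_2\times\Sigma_3)$ and hence in $\pi_1(X)$; the relations must hold in $\pi_1(X_0)$, where a commutator $[x,y]$ vanishes only if the torus $x\times y$ (together with its $\alpha$-translate) can be kept disjoint from all six surgery tori \emph{and their $\alpha$-translates}. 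For $a_2\times d_2$ this fails: the proof of Lemma~\ref{lem: framings} explicitly uses the fact that the $(\tilde{b}_1\tilde{b}_2)\times\tilde{c}_2'$ cylinder meets the $a_2\times d_2$ torus once at $\{y_0\}\times\{v_0\}$ --- that intersection is precisely what produces the meridian word $[a_1,\tilde{c}_2 d_2^{-1}\tilde{c}_2^{-1}]=\tilde{c}_2[a_2,d_2^{-1}]\tilde{c}_2^{-1}$ in the sixth framing triple. So $[a_2,d_2]$ is \emph{not} trivial in $\pi_1(X_0)$, and a similar obstruction applies to $[a_1,d_2]$. Indeed, if $d_2$ commuted with $a_1$ and $a_2$, the twelfth relation would collapse to $\tilde{c}_2 b_2=1$, an extra relation not in the lemma, and the entire computation in the proof of Theorem~\ref{thm: pi_1} (which derives $b_2\tilde{c}_2 d_2^{-1}=a_2 d_2^{-1}a_2^{-1}$ as a nontrivial consequence) would be vacuous. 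This is why the list contains $[b_1,d_2]=1$ and $[b_2,d_2]=1$ but conspicuously not $[a_1,d_2]=1$ or $[a_2,d_2]=1$.

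What the paper does instead is exhibit specific closed curves $\gamma$ and $\delta$ on $\Sigma_2$ (drawn in Figure~\ref{fig: lagrangian tori}) representing $a_1^{-1}b_1^{-1}a_2$ and $a_2^{-1}b_2^{-1}a_1$ respectively, chosen so that the tori $q(\gamma\times d_2)$ and $q(\delta\times d_2)$ do lie entirely in $X_0$; the fifth and sixth relations are then genuine geometric inputs carried by these tori, not formal consequences of stronger commutation relations. Your closing caveat about checking disjointness from the surgery tori is exactly the right instinct, but it must also include disjointness from the $\alpha$-translates upstairs in $\Sigma_2\times\Sigma_3$, and it directly contradicts the specific claim you make about $d_2$. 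To repair the argument you need to replace the formal derivation of relations five and six by the construction of such representative tori in $X_0$.
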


\begin{proof}
By using Seifert-Van Kampen theorem,
the generators of $\pi_1(\M)$ can be determined from Lemmas~\ref{lem: generators} and \ref{lem: framings}.
The first relation comes from (\ref{eq: a's}), which continues to hold in $\pi_1(\M)$ because the image of homotopy (\ref{eq: homotopy F_1}) is contained in $q(a_2\times\tau_2(\tilde{c}_2))$, which lies inside $X_0 \subset \M$.
For example, we see that $q(a_2\times\tau_2(\tilde{c}_2))$ is disjoint from the fifth surgery torus $q(b_1'' \times d_2')$ in (\ref{eq: 6 surgeries}), since $a_2\times \tau_2(\tilde{c}_2)$ is disjoint from $(b_1'' \times d_2') \cup (\tau_1(b_1'') \times \tau_2(d_2'))$ in $\Sigma_2\times \Sigma_3$.  See Figures~\ref{fig: lagrangian tori} and \ref{fig: c_2}.

The second relation comes from (\ref{eq: b's}), which continues to hold in $\pi_1(\M)$ because the image of homotopy (\ref{eq: homotopy F_2}) is contained in $q(b_2\times\tau_2(\tilde{c}_2)) \subset X_0$.  The third relation can be written as
\begin{equation*}
\begin{split}
q_{\ast}(b_1\times\{w_0\})&=q_{\ast}((\{z_0\}\times\tilde{c}_2^{-1})\cdot(b_2\times\{w_0\})\cdot(\{z_0\}\times\tilde{c}_2)) \\
&= q_{\ast}((\{z_0\}\times\tau_2(\tilde{c}_2^{-1}))\cdot(b_1\times\{w_1\})\cdot(\{z_0\}\times\tau_2(\tilde{c}_2)))
\end{split}
\end{equation*}
The corresponding based homotopy is given by
\begin{equation*}
F_3(s,t) = \left\{\begin{array}{lll}
q(b_1(0) \times \tau_2(\tilde{c}_2(1-3t))) & {\rm\ if\ } & 0\leq t \leq s/3 , \\[2pt]
q(b_1(\frac{3t-s}{3-2s}) \times \tau_2(\tilde{c}_2(1-s)))  &  {\rm \ if\ } & s/3\leq t \leq (3-s)/3 , \\[2pt]
q(b_1(1) \times \tau_2(\tilde{c}_2(3t-2))) & {\rm \ if\ } & (3-s)/3 \leq t \leq 1.
\end{array}\right.
\end{equation*}
The image of $F_3$ is contained in $q(b_1\times\tau_2(\tilde{c}_2))$,
which in turn lies inside $X_0$.

The fourth relation comes from the torus $q(b_2\times d_2)$ lying in $X_0$.
The fifth relation comes from the torus $q(\gamma\times d_2)$ lying in $X_0$,
where $\gamma$\/ is the closed path drawn in Figure~\ref{fig: lagrangian tori}.
Note that
\begin{equation*}
q_{\ast}(\gamma\times\{w_0\})=q_{\ast}((a_1^{-1}\times\{w_0\})
\cdot(b_1^{-1}\times\{w_0\})\cdot(a_2\times\{w_0\}))
\end{equation*}
via a based homotopy that is supported inside $q(\Sigma_2\times\{w_0\})\subset X_0$.
Similarly, the sixth relation comes from the torus $q(\delta\times d_2)$ in $X_0$,
where $\delta$\/ is the closed path drawn in Figure~\ref{fig: lagrangian tori} satisfying
\begin{equation*}
q_{\ast}(\delta\times\{w_0\})=q_{\ast}((a_2^{-1}\times\{w_0\})
\cdot(b_2^{-1}\times\{w_0\})\cdot(a_1\times\{w_0\})).
\end{equation*}

The next eleven relations are standard and can be derived as in \cite{FPS} from Lemma~\ref{lem: framings} and the definition of torus surgery.
Note that $q_{\ast}([a_1,b_1][a_2,b_2] \times \{w_0\})=1$ holds in $\pi_1(\M)$
because of the presence of the genus 2 surface
$q(\Sigma_2 \times \{w_0\})$ inside $X_0 \subset \M$.
The last relation $q_{\ast}(\{z_0\}\times [c_1,d_1][\tilde{c}_2,d_2])=1$ holds in $\pi_1(\M)$ since $q(\{z_0\}\times\Sigma_3)$ is a genus 2 surface in $X_0 \subset \M$ whose fundamental group is generated by
$q(\{z_0\}\times c_1)$, $q(\{z_0\}\times d_1)$, $q(\{z_0\}\times\tilde{c}_2)$ and $q(\{z_0\}\times d_2)$.
\end{proof}

\begin{thm}\label{thm: pi_1}
We have $\pi_1(\M)\cong\Z/p$.
In particular, $\pi_1(\Mz)\cong\Z$ and
$\pi_1(\Mn)=0$ for every integer $n\geq 1$.
\end{thm}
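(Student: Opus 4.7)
The plan is to reduce the presentation of Lemma~\ref{lem: presentation} until only the generator $d_2$ and the relation $d_2^p = 1$ remain. First I would use the three conjugation relations to eliminate $a_2$ and $b_2$ as generators by setting $a_2 := \tilde{c}_2^{-1} a_1 \tilde{c}_2$ and $b_2 := \tilde{c}_2^{-1} b_1 \tilde{c}_2$; the third such relation then imposes the consistency $[b_1, \tilde{c}_2^{2}] = 1$. Once one shows that $a_1 = b_1 = c_1 = d_1 = 1$ (and hence $a_2 = b_2 = 1$), the remaining relations collapse cleanly: $\tilde{c}_2^{-1} a_1 a_2 \tilde{c}_2 a_1^{-1} a_2^{-1} = d_2^p$ becomes $d_2^p = 1$, $[a_1, \tilde{c}_2 d_2^{-1} \tilde{c}_2^{-1}] = \tilde{c}_2 b_2$ becomes $\tilde{c}_2 = 1$, and $[c_1, d_1][\tilde{c}_2, d_2] = 1$ is automatic. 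Hence $\pi_1(\M) \cong \langle d_2 \mid d_2^{p} \rangle \cong \Z/p$, from which $\pi_1(\Mz) \cong \Z$ and $\pi_1(\Mn) = 0$ follow by specialization.

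The central task is therefore to show $a_1 = b_1 = c_1 = d_1 = 1$, using the four Luttinger-surgery relations
\[
a_1 = [b_1^{-1}, d_1^{-1}]^{n}, \quad b_1 = [a_1^{-1}, d_1], \quad c_1 = [b_2^{-1}, d_1^{-1}], \quad d_1 = [b_2, c_1^{-1}],
\]
together with the commutations $[a_1, c_1] = [b_1, c_1] = [a_2, c_1] = [a_2, d_1] = 1$. My approach is to propagate the commutations through the commutator definitions. For example, combining $[a_1, c_1] = [b_1, c_1] = 1$ with $b_1 = [a_1^{-1}, d_1]$ shows that $c_1$ commutes with $d_1 a_1 d_1^{-1}$, and translating $[a_2, c_1] = [a_2, d_1] = 1$ through $a_2 = \tilde{c}_2^{-1} a_1 \tilde{c}_2$ yields parallel commutations between $a_1$ and $\tilde{c}_2$-conjugates of $c_1$ and $d_1$. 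Matching these pairs of relations should pin down $[b_2, c_1] = [b_2, d_1] = 1$, at which point the third and fourth Luttinger relations immediately give $c_1 = d_1 = 1$; then $b_1 = [a_1^{-1}, 1] = 1$, and finally $a_1 = 1^{n} = 1$, so the value of $n \geq 1$ plays no role. The genus-2 surface relation $[a_1, b_1][a_2, b_2] = 1$ may be needed to close arguments that involve $a_1$ and $b_1$ simultaneously.

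This second step is the main obstacle: the commutations available sit on the $a_1, a_2$ side of the presentation while $c_1$ and $d_1$ are defined as commutators involving $b_2$, so one has to use the conjugation-by-$\tilde{c}_2$ relations as a bridge and thread the commutator identities carefully enough to isolate $[b_2, c_1]$ and $[b_2, d_1]$. Once this obstruction is overcome, the first and third steps above are essentially bookkeeping.
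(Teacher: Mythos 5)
Your outer framework (kill $a_1,b_1,c_1,d_1$, then watch the presentation collapse to $\langle d_2 \mid d_2^{p}\rangle$) matches the paper's, and your endgame is correct. But the central step is exactly where you stop: you assert that matching the commutations of $c_1,d_1$ with $a_1,a_2,b_1$ against their $\tilde{c}_2$-conjugates ``should pin down'' $[b_2,c_1]=[b_2,d_1]=1$, and no derivation is given. I do not see how one could be given from the relations you allow yourself at that stage: your plan uses only the four surgery relations, the conjugation relations, and $[a_1,c_1]=[b_1,c_1]=[a_2,c_1]=[a_2,d_1]=1$. None of these controls how $\tilde{c}_2$ interacts with $c_1$ or $d_1$, which is precisely what you would need to transport a commutation involving $b_1$ into one involving $b_2=\tilde{c}_2^{-1}b_1\tilde{c}_2$. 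This is a genuine gap, not bookkeeping.

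The paper's proof closes this gap by routing through $d_2$, which your sketch never touches in the key step. Concretely: $[a_1^{-1}b_1^{-1}a_2,d_2]=1$ and $[a_2^{-1}b_2^{-1}a_1,d_2]=1$ combine to give $[a_2^{-1}b_2^{-1}b_1^{-1}a_2,\,d_2^{-1}]=1$; the relations $a_2=\tilde{c}_2^{-1}a_1\tilde{c}_2$ and $[a_1,\tilde{c}_2 d_2^{-1}\tilde{c}_2^{-1}]=\tilde{c}_2 b_2$ give $b_2\tilde{c}_2 d_2^{-1}=a_2 d_2^{-1}a_2^{-1}$; substituting and using $[b_1,d_2]=[b_2,d_2]=1$ yields $[b_1,b_2^{2}]=1$. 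Separately, the two relations $c_1=[b_2^{-1},d_1^{-1}]$ and $d_1=[b_2,c_1^{-1}]$ (which you do have) imply $d_1=b_2^{2}c_1^{-1}b_2^{-2}$; combined with $[b_1,c_1]=1$ and $[b_1,b_2^{2}]=1$ this forces $[b_1,d_1]=1$, hence $a_1=[b_1^{-1},d_1^{-1}]^{n}=1$, and the rest collapses as you describe. So the missing idea is to use the relations $[b_2,d_2]=1$, $[a_1^{-1}b_1^{-1}a_2,d_2]=1$, $[a_2^{-1}b_2^{-1}a_1,d_2]=1$, $[b_1,d_2]=1$ and $[a_1,\tilde{c}_2 d_2^{-1}\tilde{c}_2^{-1}]=\tilde{c}_2 b_2$ to prove $[b_1,b_2^{2}]=1$; without some substitute for this your argument does not close. (A smaller omission: to get $\Z/p$ on the nose rather than a proper quotient of it, you also need a lower bound on the order of $d_2$, which comes from $H_1(\M;\Z)\cong\Z/p$, i.e.\ from Lemma~\ref{lem: infinite order}.)
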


\begin{proof}
From the fifth and sixth relations in Lemma~\ref{lem: presentation}, we deduce that
$d_2^{-1}$ commutes with the product
\begin{equation*}
(a_2^{-1}b_2^{-1}a_1)(a_1^{-1}b_1^{-1}a_2)= a_2^{-1}b_2^{-1}b_1^{-1}a_2.
\end{equation*}
It follows that
\begin{equation}\label{eq: commutator}
[b_2^{-1}b_1^{-1},a_2d_2^{-1}a_2^{-1}]=
a_2[ a_2^{-1}b_2^{-1}b_1^{-1}a_2,d_2^{-1}]a_2^{-1}=1.
\end{equation}
From the first and the twelfth relations in Lemma~\ref{lem: presentation},
we have
\begin{equation*}
\tilde{c}_2 b_2 = a_1 \tilde{c}_2 d_2^{-1} \tilde{c}_2^{-1}
a_1^{-1} \tilde{c}_2 d_2 \tilde{c}_2^{-1} =
\tilde{c}_2 a_2 d_2^{-1} a_2^{-1} d_2 \tilde{c}_2^{-1}.
\end{equation*}
Canceling the $\tilde{c}_2$'s from both sides and then rearranging, we conclude that
\begin{equation*}
b_2 \tilde{c}_2 d_2^{-1} = a_2 d_2^{-1} a_2^{-1} .
\end{equation*}
Thus (\ref{eq: commutator}) can be rewritten as
\begin{equation*}
1=[b_2^{-1}b_1^{-1}, b_2 \tilde{c}_2 d_2^{-1}]=
b_2^{-1}b_1^{-1} b_2 \tilde{c}_2 d_2 ^{-1} b_1 b_2 d_2 \tilde{c}_2^{-1} b_2^{-1}.
\end{equation*}
Since $d_2^{-1}$ commutes with both $b_1$ and $b_2$
by the fourth and the seventeenth relations,
we deduce that
\begin{equation*}
1 = b_2^{-1} b_1^{-1} b_2 \tilde{c}_2 b_1 b_2 \tilde{c}_2^{-1} b_2^{-1} 
= b_2^{-1} b_1^{-1} b_2 b_2 b_1 b_2^{-1}.
\end{equation*}
Hence $b_1^{-1}b_2^2 b_1 b_2^{-2}=1$, and so 
$b_1$ commutes with $b_2^2$.

From the ninth and the tenth relations, we deduce that
\begin{align*}
c_1 &= b_2^{-1} d_1^{-1} b_2 d_1=b_2^{-1} [c_1^{-1},b_2] b_2 [b_2,c_1^{-1}]\\
&= b_2^{-1} c_1^{-1} b_2 c_1 b_2 c_1^{-1} b_2^{-1} c_1
= b_2^{-1} d_1^{-1} b_2 b_2 c_1^{-1} b_2^{-1} c_1.
\end{align*}
Canceling the $c_1$'s from both sides and then rearranging, we conclude that
\begin{equation*}
d_1=b_2^2 c_1^{-1} b_2^{-2}.
\end{equation*}
Since $b_1$ also commutes with $c_1$ by the fourteenth relation,
$b_1$ must commute with $d_1$.
It follows that $a_1=[b_1^{-1},d_1^{-1}]^n=1$.
From $a_1=1$, we can easily deduce that all other generators are trivial
except for $d_2$.
By Lemma~\ref{lem: infinite order}, $d_2$ has order $p$\/ in $\pi_1(\M)$ if $p$\/ is a positive integer.
\end{proof}

\begin{rmk}
At the moment,
$\sM$ has the smallest Euler characteristic amongst
all closed minimal symplectic\/ $4$-manifolds having fundamental group isomorphic
to $\Z/p$.
\end{rmk}

\section{Homeomorphism and Seiberg-Witten invariant}
\label{sec: sw}

Throughout this section, let $p=1$.
For every integer $n\geq1$, $\Mn$ is a closed simply connected spin $4$-manifold having intersection form $H$\/ (see (\ref{eq: H})) with a basis given by the homology classes of genus 2 surfaces $q(\Sigma_2 \times \{w_0\})$ and $q(\{z_0\} \times \Sigma_3)$.
By Freedman's classification theorem in \cite{freedman},
$\Mn$ is homeomorphic to $S^2\times S^2$ for every $n\geq 1$.

\begin{thm}\label{thm: M_1^1}
The symplectic\/ $4$-manifold $\ssM$ is homeomorphic but
not diffeomorphic to $S^2\times S^2$.
\end{thm}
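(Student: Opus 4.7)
The plan is to dispatch the two claims separately, with the homeomorphism assertion essentially already in hand and the non-diffeomorphism assertion relying on a Seiberg--Witten comparison.

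For the homeomorphism part, I would assemble the following: Theorem~\ref{thm: pi_1} (with $n=p=1$) gives $\pi_1(\ssM) = 0$; the intersection form of $\ssM$ is even (as stated at the end of Section~\ref{sec: s2xs2}); and torus surgery preserves both Euler characteristic and signature, so $e(\ssM) = e(X) = 4$ and $\sigma(\ssM) = \sigma(X) = 0$, giving $b_2^+(\ssM) = b_2^-(\ssM) = 1$. The only even symmetric bilinear form of rank $2$ and signature $0$ is $H$, so by Freedman's classification \cite{freedman}, $\ssM$ is homeomorphic to $S^2 \times S^2$.

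For the non-diffeomorphism part, the strategy is to produce a Seiberg--Witten basic class on $\ssM$ and appeal to the fact that $S^2 \times S^2$ has none. Specifically, with $n=p=1$ all six torus surgeries in (\ref{eq: 6 surgeries}) are Luttinger surgeries, and by the Auroux--Donaldson--Katzarkov result cited in \cite{ADK} each Luttinger surgery preserves the symplectic structure and preserves minimality. Since the starting manifold $X$ is a minimal complex surface of general type, $\ssM$ is a minimal symplectic $4$-manifold (this is also asserted at the end of Section~\ref{sec: s2xs2}). By Taubes' theorem, the Seiberg--Witten invariant of the canonical $\text{Spin}^c$ structure of $\ssM$, computed in the chamber determined by the symplectic form, equals $\pm 1$. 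On the other hand, $S^2 \times S^2$ admits a product of round metrics, which has positive scalar curvature, and by the standard Weitzenb\"ock vanishing argument all Seiberg--Witten invariants of $S^2 \times S^2$ vanish in every chamber. Hence $\ssM$ and $S^2 \times S^2$ have distinct Seiberg--Witten data and cannot be diffeomorphic.

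The main obstacle is the $b_2^+(\ssM) = 1$ subtlety: Seiberg--Witten invariants in this setting depend on a chamber, and the argument must be chamber-aware. I would resolve this by invoking Taubes' non-vanishing in the symplectic chamber (rather than the pure topological SW), while noting that a PSC metric kills SW in \emph{every} chamber; so the symplectic-chamber nonvanishing on $\ssM$ suffices to distinguish it from any manifold admitting PSC, in particular from $S^2 \times S^2$. This also explains, in passing, why $\ssM$ admits no PSC metric. Everything else is bookkeeping already performed in the preceding sections.
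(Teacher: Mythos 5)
Your homeomorphism argument is fine and is essentially what the paper does in the first paragraph of Section~\ref{sec: sw}. The non-diffeomorphism half, however, hinges on a false statement: a positive scalar curvature metric does \emph{not} force the Seiberg--Witten invariants of a $b_2^+=1$ manifold to vanish in every chamber. For $b_1=0$, $b_2^+=1$ and a spin$^c$ structure $L$ of formal dimension $d(L)=\frac{1}{4}(L^2-2e-3\sigma)=0$, the wall-crossing formula gives $SW^+(L)-SW^-(L)=\pm 1$, so the invariant is necessarily nonzero in one of the two chambers. On $S^2\times S^2$ the characteristic class $L=2A+2B$ has $L^2=8=2e+3\sigma$, hence $d(L)=0$; the PSC product metric only shows vanishing in the chamber containing that metric with small perturbation, while in the opposite chamber the invariant is $\pm1$. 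Consequently ``nonzero in the Taubes chamber on $\ssM$ versus zero in every chamber on $S^2\times S^2$'' is not a valid comparison as stated. To close the gap you must compare the \emph{same} invariant on both sides: use the small-perturbation invariant (well defined for $L\neq 0$ with $L^2\geq 0$, since the period point then never meets the wall; this is Lemma~3.2 of \cite{szabo}), note it vanishes on $S^2\times S^2$ by the PSC metric, and invoke the $b_2^+=1$ refinement of Taubes' theorem --- the proof of Theorem~1.2 in \cite{szabo}, which the paper cites at the end of Section~\ref{sec: sw} --- asserting that for a minimal symplectic $4$-manifold that is not rational or ruled the Taubes chamber coincides with the small-perturbation chamber, so $SW^{0}_{\ssM}(\pm c_1)=\pm1$. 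With that correction your route works and is essentially how the paper treats the whole family in Theorem~\ref{thm: sw}.

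For this particular statement the paper actually takes a different, chamber-free route: by \cite{li} the symplectic Kodaira dimension is a diffeomorphism invariant, by \cite{ho-li} it is preserved under Luttinger surgery, so $\ssM$ inherits Kodaira dimension $2$ from the minimal general type surface $X$, whereas $S^2\times S^2$ has Kodaira dimension $-\infty$. That argument sidesteps exactly the wall-crossing subtlety on which your write-up founders, which is worth keeping in mind when $b_2^+=1$.
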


\begin{proof}
From \cite{li}, we know that the symplectic Kodaira dimension is a diffeomorphism invariant.
The rational ruled surface $S^2\times S^2$ has Kodaira dimension $-\infty$.
$X$\/ is a minimal surface of general type and hence has Kodaira dimension $2$.
Since $\ssM$ is the result of six Luttinger surgeries on $X$\/ and Luttinger surgeries preserve symplectic Kodaira dimension (cf.~\cite{ho-li}),
$\ssM$ has Kodaira dimension $2$ as well.
\end{proof}

Let $A$\/ and $B$\/ denote the 2-dimensional cohomology classes of $\Mn$
that are Poincar\'e dual to the homology classes of $q(\Sigma_2 \times \{w_0\})$ and $q(\{z_0\} \times \Sigma_3)$, respectively.  Let
\begin{equation*}
SW_{\Mn}: H^2(\Mn;\Z) \longrightarrow \Z
\end{equation*}
denote the `small perturbation' Seiberg-Witten invariant of $\Mn$
(cf.~Lemma 3.2 in \cite{szabo}).

\begin{thm}\label{thm: sw}
$SW_{\Mn}(L)\neq 0$ only when $L=\pm(2A+2B)$,
and
\begin{equation*}
|SW_{\Mn}(\pm(2A+2B))|=n.
\end{equation*}
\end{thm}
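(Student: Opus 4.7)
The plan is to combine Taubes' theorem on the symplectic manifold $\ssM$ with the Morgan--Mrowka--Szab\'o torus surgery formula to reduce the computation on $\Mn$ to that on $\ssM$, while using adjunction and dimension constraints to pin down the candidate basic classes.

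For the constraint on basic classes, note that the genus-$2$ surfaces $q(\Sigma_2\times\{w_0\})$ and $q(\{z_0\}\times\Sigma_3)$ can be arranged disjoint from every torus in $(\ref{eq: 6 surgeries})$, and hence survive in $\Mn$ as smoothly embedded surfaces of self-intersection $0$ with Poincar\'e duals $A$ and $B$ respectively. Writing a basic class as $L=\alpha A+\beta B$, the characteristic element condition $L\cdot x\equiv x\cdot x\pmod 2$ forces $\alpha,\beta\in 2\mathbb{Z}$; the generalized adjunction inequality for small-perturbation SW on $b^+=1$ manifolds (cf.\ Lemma~3.2 of \cite{szabo}) applied to these two surfaces yields $|\alpha|\le 2$ and $|\beta|\le 2$; and the $d=0$ dimension condition $L^2=2\chi+3\sigma=8$ reduces, via $L^2=2\alpha\beta$, to $\alpha\beta=4$. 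These three constraints together force $(\alpha,\beta)=\pm(2,2)$, so the only candidate basic classes are $\pm(2A+2B)$.

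For the value on the canonical class, the base case $n=1$ follows from Theorem~\ref{thm: M_1^1}: $\ssM$ is a minimal symplectic $4$-manifold with $b^+=1$ and canonical class $K=2A+2B$ (checked via the adjunction formula on the two symplectic surfaces), so Taubes' theorem in its small-perturbation form gives $|SW_{\ssM}(\pm K)|=1$. For general $n$, let $N$ be the result of performing only the five Luttinger surgeries at positions 2--6 of $(\ref{eq: 6 surgeries})$ on $X$; then the first surgery torus $T_1=q(a_1'\times c_1')$ is a nullhomologous Lagrangian torus in $N$, and $\Mn$ is the $(-1/n)$-surgery on $(T_1,a_1')$ inside $N$ with respect to the Lagrangian framing. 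The Morgan--Mrowka--Szab\'o torus surgery formula (as used throughout \cite{FPS}) applied to the triple $(N,N_0,\Mn)$, with $N_0$ the $0$-surgery on $(T_1,a_1')$ in $N$, expresses $SW_{\Mn}(\pm K)$ as a linear function $c_0+n\cdot c_1$ in $n$. The term $c_0$ vanishes because $N$ has $b_1>0$ and the canonical Spin$^c$ structure of $\Mn$ does not match a nonzero basic class of $N$ under the relevant restriction, while matching with the $n=1$ value forces $|c_1|=1$, yielding $|SW_{\Mn}(\pm K)|=n$.

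The main technical obstacle lies in the Spin$^c$ and chamber bookkeeping in the $b^+=1$ framework: one must verify that the Spin$^c$ structure identifications in the Morgan--Mrowka--Szab\'o formula match the canonical ones on $\ssM$ and $N_0$, that the small-perturbation chambers line up correctly, and that no additional basic classes appear from Spin$^c$ structures differing from $\pm K$ by multiples of $\mathrm{PD}[T_1]$ (which vanishes in $H^2(N)$ since $T_1$ is nullhomologous in $N$).
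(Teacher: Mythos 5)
Your overall strategy (adjunction plus a dimension count to locate the basic classes, Taubes for the symplectic member, and the Morgan--Mrowka--Szab\'o surgery formula to extract linearity in $n$) is the same as the paper's, but the execution contains a genuine error that breaks the second half of the argument. The torus $T_1=q(a_1'\times c_1')$ is \emph{not} nullhomologous in $N$ (the result of surgeries 2--6, i.e.\ the paper's $Z$): its class equals $[a_1\times c_1]$, which is half of the hyperbolic pair $([a_1\times c_1],-[b_1\times d_1])$ in the intersection form $2H$ of $Z$ --- as one can also see from the fact that $b_2$ drops from $4$ to $2$ under the remaining surgery. Only the core torus inside $\Mn$ becomes nullhomologous. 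Consequently your closing remark, that no extra Spin$^c$ structures enter because $\mathrm{PD}[T_1]=0$ in $H^2(N)$, rests on a false premise: in the MMS formula the term multiplying $n$ is $\sum_i SW_{N}\bigl(K_N+2i\,\mathrm{PD}[T_1]\bigr)$, a sum over infinitely many \emph{distinct} Spin$^c$ structures on $N$, and to evaluate it one must determine all basic classes of $N$, not of $\Mn$. This is precisely what the paper does: it shows $SW_Z$ is nonzero only on $\pm c_1(Z)$ by applying adjunction to all four basis surfaces of $Z$ (the two square-zero tori force $L\cdot[a_1\times c_1]=L\cdot[b_1\times d_1]=0$), which is legitimate because $Z$ has $b_2^+=2$ and hence no chamber issues. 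Your computation of the candidate basic classes of $\Mn$ does not substitute for this input.

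Relatedly, your claim that the constant term $c_0$ vanishes is not an argument: $c_0$ is the contribution of the $0$-surgery manifold $N_0$ (your stated reason even conflates $N_0$ with $N$; note that if the $N$-contribution vanished, the invariant would be independent of $n$). Without $c_0=0$, matching at $n=1$ gives only $c_0+c_1=\pm1$ and does not determine $c_1$. In \cite{FPS, ABP} this is handled by taking differences $SW_{M_n^1}-SW_{M_{n-1}^1}$, so that the $N_0$ terms cancel and everything again reduces to the basic classes of $Z$; the paper's one-line reduction ``the theorem follows once $SW_Z\neq0$ only on $\pm c_1(Z)$'' is pointing at exactly this. A smaller issue: you invoke the adjunction inequality for square-zero genus-$2$ surfaces directly on the $b_2^+=1$ manifold $\Mn$ with the small-perturbation invariant, which requires a chamber-dependent justification; the paper sidesteps this entirely by running the adjunction argument on $Z$.
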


\begin{proof}
Let $Z$\/ denote the symplectic $4$-manifold obtained by performing the following five Luttinger surgeries on $X$:
\begin{equation*}
\begin{array}{c}
(b_1' \times c_1'', b_1', -1), \ \
(a_2' \times c_1', c_1', -1),  \ \
(a_2'' \times d_1', d_1', -1),\\[2pt]
(b_1'' \times d_2', d_2', -1), \ \
((\tilde{b}_1\tilde{b}_2) \times \tilde{c}_2', \beta, -1).
\end{array}
\end{equation*}
Note that these are five of the six surgeries in (\ref{eq: 6 surgeries})
with $p=1$.
Hence we obtain $\Mn$ by performing
$(a_1' \times c_1', a_1', -n)$ surgery on $Z$.
We have $e(Z)=4$, $\sigma(Z)=0$,
$b_1(Z)=1$, $b_2(Z)=4$, and
the intersection form of $Z$\/ is isomorphic to $2H$\/ with a basis given by
\begin{equation}\label{eq: Z basis}
([a_1 \times c_1], -[b_1 \times d_1]),\ \
([\Sigma_2 \times \{w_0\}], [\{z_0\} \times \Sigma_3]).
\end{equation}
As shown in \cite{FPS, ABP}, our theorem will follow at once if we can prove that
the Seiberg-Witten invariant of $Z$\/ is nonzero only on
$\pm c_1(Z)$.

We abuse the notation slightly and
let $A$\/ and $B$\/ also denote the Poincar\'e duals of
$[\Sigma_2 \times \{w_0\}]$ and $[\{z_0\} \times \Sigma_3]$,
respectively, in $H^2(Z;\Z)$.
If $SW_Z (L) \neq 0$, then by applying the adjunction inequality
to four surfaces in (\ref{eq: Z basis}), we conclude that
$L= rA + sB$, where $r$\/ and $s$\/ are even integers satisfying
$|r|\leq 2$ and $|s|\leq 2$.
Since the dimension of the Seiberg-Witten moduli space for $L$\/ is nonnegative,
we must have
\begin{equation*}
L^2 = 2rs \geq 2e(Z)+3\sigma(Z)=8.
\end{equation*}
It follows that $r=s=\pm 2$, and $L=\pm(2A+2B)=\mp c_1(Z)$.
By Taubes's theorem in \cite{taubes}, we know that
$|SW_Z(\pm c_1(Z))|=1$.
\end{proof}

Since the value of the Seiberg-Witten invariant for the canonical class
of a symplectic $4$-manifold is always $\pm 1$ by the work of Taubes
\cite{taubes} (see the proof of Theorem~1.2 in \cite{szabo}
for the $b_2^+=1$ case),
$\Mn$ cannot be symplectic when $n\geq 2$.
Hence we conclude that $\{\Mn \mid n\geq 2\}$ are irreducible
(see Lemma~\ref{lem: irreducibility}), nonsymplectic and mutually nondiffeomorphic.
This concludes the proof of Theorem~\ref{thm: main}.

\begin{rmk}
In \cite{rasmussen}, Rasmussen has computed the Ozsv\'ath-Szab\'o invariant
of $\Mn$,
and has shown that $\Mn$'s do not admit any perfect Morse function.
\end{rmk}

\section*{Acknowledgments}
The first author was partially supported by an NSF grant.
The second author was partially supported by an NSERC discovery grant.
Some of the computations have been double-checked by
using the computer software package GAP \cite{GAP}.
The authors thank Selman Akbulut, Ronald Fintushel, Alexander Hulpke,
Masashi Ishida, Tian-Jun Li and Ronald J. Stern for helpful discussions.

\end{document}